\documentclass[10pt]{amsart}
\usepackage[font=small,labelfont=bf]{caption}
\usepackage[margin=1.25in]{geometry}
\newtheorem{theorem}{Theorem}[section]
\newtheorem{lemma}[theorem]{Lemma}
\newtheorem{proposition}{Proposition}[section]

\newtheorem{corollary}[theorem]{Corollary}
\theoremstyle{definition}
\newtheorem{definition}[theorem]{Definition}

\usepackage{color}
\usepackage{verbatim}
\usepackage{hyperref}
\usepackage[colorinlistoftodos]{todonotes}

\theoremstyle{remark}

\numberwithin{equation}{section}

\usepackage{graphicx, subcaption}
\usepackage{graphicx}
\usepackage{amsmath,amsfonts,amssymb,amsthm}
\usepackage{mathtools}
\usepackage{commath}
\usepackage{colortbl}

\begin{document}
	\title{The Boltzmann-BGK model with nontrivial collision frequency near vacuum}
	
	\author{Sungsu Park,  Seok-Bae Yun}

	\begin{abstract}
        We study the Cauchy problem for the Boltzmann-BGK model in the three-dimensional whole space with nontrivial collision frequency $\nu(f)=\rho^\alpha T^{\beta}$, where $\alpha\in(1/3,1]$, $\beta\in[0,1]$, and $3\alpha+2\beta\geq 2$. Existing well-posedness results for such models have largely been confined to near-equilibrium regimes and stationary problems. For nonnegative initial data with finite mass and energy that are sufficiently small in suitable polynomial-weighted $L^{\infty}$ norms, we establish global existence and uniqueness of mild solutions near vacuum. No uniform positive lower bound on the macroscopic density or temperature is imposed. The analysis relies on two complementary mechanisms. First, a phase-space weight invariant along free-transport characteristics yields dispersive estimates for the collision frequency and the gain term. The resulting time-integrable decay controls the nonlinear growth and closes the global weighted estimates. Second, we establish a Lipschitz estimate for the relaxation operator in a weighted $L^1$ space controlling mass and energy, with a Lipschitz constant depending only on weighted upper bounds for the distribution functions. The relaxation operator $\nu\mathcal{M}$ has a more favorable structure for Lipschitz estimates than the local Maxwellian $\mathcal{M}$ alone: the collision frequency compensates for the singular dependence on the macroscopic fields, allowing us to establish Lipschitz continuity without uniform positive lower bounds for the density or temperature. Together, these estimates yield a unique global mild solution with uniform weighted bounds.
	\end{abstract}
	\maketitle
\tableofcontents
\noindent\textbf{Keywords:}
Boltzmann equation; BGK model; nontrivial collision frequency; near vacuum;
global well-posedness.
\section{Introduction}
The Boltzmann equation provides a kinetic description of monatomic, non-ionized gases. However, the complexity of its collision operator poses a significant challenge to the development of efficient numerical methods. To address this difficulty, Bhatnagar, Gross, and Krook \cite{bhatnagar1954model} proposed a simplified kinetic model, which is called the BGK model, where the collision operator is replaced by a relaxation term. In this paper, we consider the Cauchy problem for the Boltzmann-BGK model, which describes the phase space evolution of gases:
\begin{align}\label{BGK model}
    \begin{split}
        \partial_t f +v\cdot \nabla_x f  &=\nu_{\alpha,\beta}(f)( \mathcal{M}(f)-f),\cr        
        f(0)&=f_0.
    \end{split}
\end{align}
The velocity distribution function $f=f(t,x,v)$ represents the number density at the phase space point $(x,v)\in\mathbb{R}^{3}\times\mathbb{R}^{3}$ at time $t\in\mathbb{R}^{+}$. 
The local Maxwellian $\mathcal{M}(f)$ reads
\begin{align}\label{Maxwellian def}
    \begin{split}
        \mathcal{M}(f) = \frac{\rho_f}{(2\pi T_f)^{\frac{3}{2}}}\exp\left(-\frac{|v-U_f|^2}{2T_f} \right),
    \end{split}
\end{align}
with the macroscopic local density $\rho(t,x)$, momentum $U(t,x)$, and temperature $T(t,x)$ which defined by
\begin{align}\label{Macroscopic fields}
    \begin{split}
       \rho_f(t,x) &= \int_{\mathbb{R}^{3}} f(t,x,v)dv, \cr
       \rho_f(t,x) U_f(t,x) &= \int_{\mathbb{R}^{3}} vf(t,x,v)dv, \cr
       \rho_f(t,x)(|U_f(t,x)|^{2} &+ 3T_f(t,x))= \int_{\mathbb{R}^{3}} |v|^{2}f(t,x,v) dv.
    \end{split}
\end{align}
The collision frequency $\nu_{\alpha,\beta}(f)$ is given by
\begin{align*}
    \begin{split}
        \nu_{\alpha,\beta}(f)=\rho^\alpha_f T_f^{\beta},\text{ for }\alpha,\beta\in[0,1].
    \end{split}
\end{align*}
The relaxation operator $\mathcal{M}(f)$ preserves the important properties of the Boltzmann equation. First, since $\mathcal{M}(f)$ shares the first three moments with $f$, the relaxation operator satisfies 
\begin{align*}
    \begin{split}
        \int_{\mathbb{R}^{3}}(\mathcal{M}(f)-f)(1,v,|v|^2)dv = 0,
    \end{split}
\end{align*}
which leads to the conservation of mass, momentum, and energy:
\begin{align*}
    \begin{split}
        \frac{d}{dt}\int_{\mathbb{R}^{6}}(1,v,|v|^2)f dxdv = 0.
    \end{split}
\end{align*}
On the other hand, the BGK model satisfies the entropy dissipation:
\begin{align*}
    \begin{split}
        \int_{\mathbb{R}^{3}}(\mathcal{M}(f)-f)\ln fdv \leq 0,
    \end{split}
\end{align*}
which gives the H-theorem:
\begin{align*}
    \begin{split}
        \frac{d}{dt}\int_{\mathbb{R}^{3}}f\log f dxdv \leq 0.
    \end{split}
\end{align*}

\subsection{Main result} Before presenting the main result, we provide notation that will be frequently used.
\begin{itemize}
    \item $C_{a,b,c,\cdots}$ means a constant which depends on $a$, $b$, $c$, $\cdots$. Moreover, these constants may take different values in each line. If no subscript is specified, like $C$, this represents a generic constant.
    \item We use $\rho$, $U$, $T$, and $\rho_f$, $U_f$, $T_f$ interchangeably to denote the macroscopic fields constructed from $f$. We use the latter case when there is a risk of confusion.     
    \item We define the weight functions and their corresponding norms as follows: 
    \begin{align*}
        \begin{split}
            &w_a(v)=(1+|v|^2)^{\frac{a}{2}},\quad W_a(t,x,v)=(1+|x-vt|^2+|v|^2)^{\frac{a}{2}},\cr
            &\|f(t)\|_{w_a}=\sup_{x,v\in\mathbb{R}^3}|w_{a}(v)f(t,x,v)|,\cr
            &\|f(t)\|_{W_a}=\sup_{x,v\in\mathbb{R}^3}|W_{a}(t,x,v)f(t,x,v)|.
        \end{split}
    \end{align*}
    \item We define the weighted $L^1$ norm as follows:
    \begin{align*}
        \begin{split}
            \|f(t)\|_{L^1_2(\mathbb{R}^6)}=\int_{\mathbb{R}^6}(1+|v|^2)|f(t,x,v)|dxdv.
        \end{split}
    \end{align*}
\end{itemize}
We now define our solution space and the mild solution to \eqref{BGK model}.
\begin{definition}\label{def:Sol def}
    The distribution function $f\in C([0,\infty);L^1_2)$ is said to be a mild solution to \eqref{BGK model} if it satisfies
    \begin{align*}
        \begin{split}
            f(t,x,v)=f_0(x-vt,v)+\int^t_0\nu_{\alpha,\beta}(f)(s,x-v(t-s))(\mathcal{M}(f)-f)(s,x-v(t-s),v)ds,
        \end{split}
    \end{align*}
    for almost every $(t,x,v)\in[0,\infty)\times\mathbb{R}^3\times \mathbb{R}^3$.
\end{definition}

We are now ready to state our main result.
\begin{theorem}\label{thm:main thm}
    Let
    \begin{align*}
        \begin{split}
            &\frac{1}{3}<\alpha\leq 1,\quad 0\leq \beta\leq 1,\quad 3\alpha+2\beta\geq 2,\cr
            &q>\max\left(3+\frac{2(1+\beta)}{\alpha},3+\frac{6\beta}{3\alpha-1} \right),\cr
            &r\in \left(3,3\alpha+2-\frac{3(3\alpha+2\beta-1)}{q} \right)\cup(5,\infty).
        \end{split}
    \end{align*}
    Assume that $f_0\geq0$, $\|f_0\|_{L^1_2}<\infty$, and there exists a sufficiently small $\varepsilon>0$ depending only on $\alpha$, $\beta$, $r$, and $q$ such that 
    \begin{align*}
        \begin{split}
            \|f_0\|_{w_q},\,\|f_0\|_{W_r}\leq \varepsilon^2.
        \end{split}
    \end{align*}
    Then, there is a unique mild solution to \eqref{BGK model} which satisfies
    \begin{itemize}
        \item $f$ is nonnegative.
        \item $f$ has the following upper bound:
        \begin{align*}
            \begin{split}
                \sup_t \|f(t)\|_{w_q}\leq \varepsilon,\quad \sup_t\|f(t)\|_{W_r}\leq \varepsilon.
            \end{split}
        \end{align*}
        \item $\nu_{\alpha,\beta}(f)$ has the following upper bound:
        \begin{align*}
            \begin{split}
                \nu_{\alpha,\beta}(f)(t,x)\leq C_{r,q}\varepsilon^\alpha(1+t^2)^{-\frac{3\alpha}{2}+\frac{3\beta}{q-3}}.
            \end{split}
        \end{align*}
        \item $f$ is uniformly bounded in $L^1_2$:
        \begin{align*}
            \begin{split}
                \sup_t\|f(t)\|_{L^1_2}\leq 2\|f_0\|_{L^1_2}.
            \end{split}
        \end{align*}
    \end{itemize}
\end{theorem}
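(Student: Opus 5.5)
The plan is to construct the solution by an iteration scheme and close a priori bounds in two norm families, following the two mechanisms described in the abstract. I will use the exponential (Duhamel-with-damping) form of the mild formulation for the iteration
\[
\partial_t f^{n+1}+v\cdot\nabla_x f^{n+1}+\nu(f^n)f^{n+1}=\nu(f^n)\mathcal{M}(f^n),\qquad f^{n+1}(0)=f_0 .
\]
Then nonnegativity of every iterate is immediate from the exponential representation.

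\textbf{Step 1 (macroscopic bounds from weighted $L^\infty$).} For $f$ with $\|f\|_{w_q}\le\varepsilon$ and $\|f\|_{W_r}\le\varepsilon$ I will bound $\rho$, $\rho T$ and hence $\nu$ pointwise.
\begin{itemize}
\item The $w_q$ bound gives $\rho\le C\varepsilon$ and $\rho(|U|^2+3T)\le C\varepsilon$, since $q>5$.
\item The $W_r$ bound yields dispersion. Since $|x-vt|^2+|v|^2$ controls $(1+t^2)^{-1}$ times a quadratic in $v$, integrating $W_r^{-1}$ in $v$ gives $\rho\le C\varepsilon(1+t^2)^{-3/2}$ when $r>3$.
\item For $T$, I interpolate: $\rho T\lesssim \int|v-U|^2 f$. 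Splitting at $|v-U|\le R$ and optimizing $R$ with the $w_q$ tail gives $T\lesssim \|f\|_{w_q}^{2/(q-3)}\rho^{-2/(q-3)}$, up to the appropriate power.
\item Combining, $\nu=\rho^\alpha T^\beta\le C\varepsilon^{\alpha}(1+t^2)^{-3\alpha/2+3\beta/(q-3)}$, i.e.\ $\rho^{\alpha-2\beta/(q-3)}$ times $\varepsilon$-powers. The conditions on $q$, together with $\alpha>1/3$, make this exponent below $-1/2$, so $\nu$ is time-integrable.
\end{itemize}

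\textbf{Step 2 (gain term).} I will bound $\nu\mathcal{M}$ in $w_q$ and $W_r$. In $w_q$, $\mathcal{M}\,w_q\lesssim \rho T^{-3/2}(1+|U|^2+T)^{q/2}$, and $\nu$ supplies $\rho^\alpha T^\beta$. Using $3\alpha+2\beta\ge2$ and the estimate $T\lesssim \rho^{-2/3}\|f\|_{w_q}^{2/3}$ (from Step 1), the factor $T^{\beta-3/2}\rho^{1+\alpha}$ is controlled. The $W_r$ weight is invariant along characteristics, and on the Maxwellian it is handled by comparing $x-vt$ with $x-Ut$, paying factors $T^{r/2}t^r$. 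This is where the range $r<3\alpha+2-3(3\alpha+2\beta-1)/q$ enters, to keep enough decay from $\rho^\alpha$; the case $r>5$ instead uses moments. The output will be
\[
\|\nu\mathcal{M}(f)\|_{W_r}+\|\nu\mathcal{M}(f)\|_{w_q}\le C\varepsilon^{1+\delta}(1+t)^{-1-\delta'} .
\]
Integrating along characteristics then gives $\|f^{n+1}\|\le \varepsilon^2+C\varepsilon^{1+\delta}\le\varepsilon$ for small $\varepsilon$, so the bounds propagate to all iterates. The $L^1_2$ bound follows by integrating the equation, using $\int\nu\mathcal{M}(1+|v|^2)=\int\nu f(1+|v|^2)$ and Gronwall with $\int_0^\infty\|\nu\|_\infty<C\varepsilon^\alpha$, which gives the factor $2$.

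\textbf{Step 3 (Lipschitz estimate and convergence).} I will show
\[
\int(1+|v|^2)\,\bigl|\nu(f)\mathcal{M}(f)-\nu(g)\mathcal{M}(g)\bigr|\,dv\le C(\varepsilon)\,\|\nu\|\text{-type factor}\times\int(1+|v|^2)|f-g|\,dv ,
\]
with constants depending only on the weighted upper bounds. The method is to differentiate along $\theta\mapsto(\rho_\theta,U_\theta,T_\theta)$, interpolating the conserved moments linearly. The derivatives $\partial_\rho,\partial_U,\partial_T$ of $\nu\mathcal{M}$ integrated against $1+|v|^2$ produce factors like $\rho^\alpha T^{\beta-1/2}$ and $\rho^{\alpha-1}T^\beta$, multiplied by differences of moments. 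Singular factors such as $1/\rho$ from $U=m/\rho$ are absorbed by $\rho^\alpha$, and $T^{\beta-1}$ is absorbed by $\rho$ powers via $3\alpha+2\beta\ge2$. Ideally the Lipschitz factor also carries time decay, $\lesssim\varepsilon^{c}(1+t)^{-1-\delta}$. Then, with damping terms handled similarly, Gronwall gives a contraction in $C([0,\infty);L^1_2)$, yielding convergence of the iterates, the limit as a mild solution, and uniqueness among solutions satisfying the bounds.

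\textbf{Main obstacle.} I expect Step 3 to be the hardest part: obtaining Lipschitz continuity without lower bounds on $\rho$ or $T$, particularly near points where $T\to0$ while $\rho$ does not. My first attempt will be to parametrize by the conserved quantities $(\rho,\rho U,E)$ and check that each derivative of $\nu\mathcal{M}$ in these variables, after $v$-integration against $1+|v|^2$, is bounded by $\rho^{\alpha-1}T^{\beta}\cdot(\text{bounded})$. I will then use the a priori relation $T\lesssim\rho^{-2/3}\varepsilon^{2/3}$ to close the estimate.
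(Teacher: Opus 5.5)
Your architecture is the paper's: the damped Picard iteration, the $W_r$-dispersion $\rho\lesssim\varepsilon(1+t^2)^{-3/2}$, the splitting $\nu=\rho^{\alpha-\frac{2\beta}{q-3}}\bigl(\rho T^{\frac{q-3}{2}}\bigr)^{\frac{2\beta}{q-3}}$, and a Taylor expansion of $\nu\mathcal{M}$ in $(\rho,\rho U,E)$ along $h_\theta=\theta f+(1-\theta)g$. The gap is the inequality meant to handle $T\to0$: you never derive it, and the version you state points the wrong way. Everything in your Step~1 is an \emph{upper} bound on $T$, and in Steps~2 and~3 you close with ``$T\lesssim\rho^{-2/3}\varepsilon^{2/3}$'', again an upper bound. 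No upper bound on $T$ can control $T^{\beta-3/2}$ or $T^{\beta-1}$ as $T\to0$. If you meant $T^{-1}\lesssim(\varepsilon/\rho)^{2/3}$, that is the correct relation, but nothing you wrote produces it. The missing tool is $\rho_f\le C_q\|f\|_{w_q}T_f^{3/2}$ (Lemma~\ref{lem:Perthame ineq}$_{(1)}$, from Perthame--Pulvirenti). It comes from a concentration argument: by Chebyshev, at least $\rho/2$ of the mass lies in $\{|v-U|\le\sqrt{6T}\}$, a ball of volume $\sim T^{3/2}$ on which $f\le\|f\|_{w_q}$. Then, for $T\le1$, $\rho^\alpha T^{\beta-1}=(\rho T^{-3/2})^\alpha T^{(3\alpha+2\beta-2)/2}\lesssim\varepsilon^\alpha$; this is exactly where $3\alpha+2\beta\ge2$ enters. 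It also shows your ``main obstacle'' ($T\to0$ with $\rho$ not small) is excluded outright by the $L^\infty$ bound.

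The same argument, with the weight evaluated near $v=U$, gives two more estimates you need but do not have.
\begin{itemize}
\item For $|U|^2>T$: $\rho|U|^qT^{-3/2}\lesssim\|f\|_{w_q}$ (Lemma~\ref{lem:Perthame ineq}$_{(3)}$). After $v$-integration the Lipschitz factor is $\rho^\alpha\{T^\beta(1+|U|^2+T)+T^{\beta-1}(1+|U|^2)^2\}$. The prefactor is $\rho^\alpha$, not $\rho^{\alpha-1}$, and the factor is not bounded in $U$. The term $\rho^\alpha|U|^4T^{\beta-1}$ is controlled only by interpolating Lemma~\ref{lem:Perthame ineq}$_{(1)}$ with $_{(3)}$, which is where $q>3+2(1+\beta)/\alpha$ is used. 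The $T$-bound together with $\rho|U|^{q-3}\lesssim\varepsilon$ does not suffice when $3\alpha+2\beta=2$.
\item The $W_r$ analogue \eqref{claim WM}: $\rho T^{-3/2}W_r(t,x,U)\lesssim\|f\|_{W_r}+(1+t^2)^{r/2}\rho T^{(r-3)/2}$. Your Step~2 needs this after comparing $W_r(t,x,v)$ with $W_r(t,x,U)$.
\end{itemize}
Finally, do not count on time decay of the Lipschitz constant. Near free transport $\rho\sim t^{-3}$ and $T\sim t^{-2}$, so $\rho^\alpha T^{\beta-1}\sim t^{-(3\alpha+2\beta-2)}$, which is not integrable, e.g.\ for $\alpha=1$, $\beta=0$. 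Convergence and uniqueness must therefore be done on each $[0,T^f]$ via the $C_*^n/n!$ bound (or in an exponentially weighted norm), as in the paper, not as a contraction on $[0,\infty)$.
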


\subsection{Key challenges}\label{S1-1}
The collision frequency in the BGK model generally depends on the local macroscopic density and temperature, which is expressed in the following form:
\begin{align*}
    \begin{split}
        \nu_{\alpha,\beta}=\rho^{\alpha}T^{\beta},\text{ for }\alpha,\beta\in[0,1].
    \end{split}
\end{align*}
The selection of $\alpha$ and $\beta$ reflects the type of gas under consideration \cite{itikawa1973effective}. For instance, Argon assumes the value $\alpha=1$ and $\beta=0.19$ \cite{chapman1970mathematical}. In \cite{bhatnagar1954model,mischler1996uniqueness,perthame1989global,perthame1993weighted,russo2009semilagrangian,russo2012convergence,russo2018convergence}, authors consider a constant collision frequency, corresponding to $\alpha=\beta= 0$. Several studies also address nontrivial collision frequencies, for which at least one of these exponents is nonzero. In particular, \cite{aoki1990numerical,boscheri2020high,pieraccini2007implicit} consider the density-dependent case $\alpha=1$ and $\beta=0$, while \cite{chapman1970mathematical,mieussens2000discrete2,rapp2010equilibration,yang1995rarefied,yang2013kinetic} treat a collision frequency depending on both density and temperature, with $\alpha,\beta\in(0,1]$. Although the well-posedness of BGK-type models with constant collision frequencies has been extensively studied, the corresponding theory for nontrivial collision frequencies remains less developed. Despite the numerical and analytical studies discussed above, rigorous well-posedness results have largely been restricted to near-equilibrium or stationary frameworks.\\
\indent Motivated by this gap, we establish global well-posedness near vacuum for nontrivial collision frequencies with a wide range of $\alpha$ and $\beta$ which satisfies the hypothesis of Theorem~\ref{thm:main thm}. This setting encompasses both density-dependent and density- and temperature-dependent collision frequencies considered in the aforementioned studies. Two principal difficulties arise in the analysis. First, the dependence of the collision frequency on the macroscopic fields introduces nonlinear growth of the distribution function. Second, the near-vacuum regime does not provide uniform positive lower bounds for the density and temperature, thereby obstructing the standard Lipschitz estimates for the local Maxwellian. The core of our analysis rests on two properties established in Lemma~\ref{lem:density and Maxwellian est} and Proposition~\ref{prop:Lip prop}: the dispersive effect of the nontrivial collision frequency in the whole space and a Lipschitz estimate that exploits the more favorable structure of the full gain operator $\nu_{\alpha,\beta}\mathcal{M}(f)$ compared with the Maxwellian $\mathcal{M}(f)$ alone.\\
$\bullet$ \textbf{Dispersive effect for the collision frequency in $\mathbb{R}^3$.} To explain the first challenge, we recall the standard Maxwellian estimate in a velocity-weighted $L^{\infty}$ space with a sufficiently large velocity exponent $q>5$ in \cite{perthame1993weighted}:
\begin{align*}
    \begin{split}
        \|\mathcal{M}(f)\|_{w_q}\leq C_q\|f\|_{w_q}.
    \end{split}
\end{align*}
When the collision frequency is constant, this estimate yields a linear Gronwall's inequality for the weighted $L^{\infty}$ norm of the distribution function.
\begin{align*}
    \begin{split}
        \|f(t)\|_{w_q} \leq e^{-t}\|f_0\|_{w_q}+C_q\int^{t}_0 e^{-(t-s)}\|f(s)\|_{w_q}ds.   
    \end{split}
\end{align*}
However, for a solution-dependent collision frequency, applying the same Maxwellian estimate directly to \eqref{BGK model} leads to a nonlinear growth estimate:
\begin{align*}
    \begin{split}
        \|f(t)\|_{w_q} \leq \|f_0\|_{w_q}+C_{q,\alpha}\int^{t}_0 \|f(s)\|^{1+\alpha}_{w_q}ds.    
    \end{split}
\end{align*}
This inequality does not provide an effective global-in-time upper bound. The obstruction lies in the additional dependence of the collision frequency on the size of the distribution function, which prevents the preceding linear Gronwall's argument from closing. To overcome this issue, we introduce a free-transport weight $W_r=(1+|x-vt|^2+|v|^2)^{\frac{r}{2}}$ and the associated weighted $L^{\infty}$ norm $\|f(t)\|_{W_r}=\sup_{x,v}|W_r(t,x,v)f(t,x,v)|$. This weight is constant along free-transport characteristics. Consequently, when taking the weight to the Lagrangian-type mild formulation, its value at the observation point agrees with its value at the corresponding phase-points in the initial and source terms:
\begin{align*}
    \begin{split}
        W_r(t,x,v)=W_r(0,x-vt,v)=W_r(s,x-v(t-s),v),\text{ for }s\in[0,t].
    \end{split}
\end{align*}
Control in this norm allows us to exploit the dispersive effect in $\mathbb{R}^3$ and derive a decay estimate for the collision frequency (see \eqref{lem d and M nu} in  Lemma~\ref{lem:density and Maxwellian est}):
\begin{align*}
    \begin{split}
        \nu_{\alpha,\beta}(f)\leq C_{r,q}\|f(t)\|^{\alpha-\frac{2\beta}{q-3}}_{W_r}\|f(t)\|^{\frac{2\beta}{q-3}}_{w_q}(1+t^2)^{-\frac{3\alpha}{2}+\frac{3\beta}{q-3}},
    \end{split}
\end{align*}
Together with the weighted Maxwellian estimates, this dispersive mechanism yields time decay for the gain term in both the velocity-weighted and the free-transport-weighted $L^{\infty}$ norms (see \eqref{lem d and M M1} and \eqref{lem d and M M2} in Lemma~\ref{lem:density and Maxwellian est}):
\begin{align*}
    \begin{split}
        &\|\nu_{\alpha,\beta}(f)\mathcal{M}(f)(t)\|_{w_q}\leq C_{r,q}\|f(t)\|^{\alpha-\frac{2\beta}{q-3}}_{W_r}\|f(t)\|^{1+\frac{2\beta}{q-3}}_{w_q}(1+t^2)^{-\frac{3\alpha}{2}+\frac{3\beta}{q-3}},\cr
        &\|\nu_{\alpha,\beta}\mathcal{M}(f)(t)\|_{W_r}\leq C_{r,q,\beta}(\|f(t)\|^{1+\alpha}_{W_r}+\|f(t)\|^{1+\alpha}_{w_q})(1+t^2)^{-\frac{\gamma}{2}}.
    \end{split}
\end{align*}
Combining these estimates with the smallness assumption on the initial data $\|f_0\|_{w,q},\|f_0\|_{W_r}\leq \varepsilon^2$, we improve the preceding nonlinear growth estimate as follows:
\begin{align*}
    \begin{split}
        &\|f(t)\|_{w_q}\leq \varepsilon^2+C_{q,\alpha}\varepsilon^{\alpha+1}\int^t_0 (1+s)^{-\frac{3\alpha}{2}+\frac{3\beta}{q-3}}ds,\cr
        &\|f(t)\|_{W_r}\leq \varepsilon^2+C_{r,q,\alpha,\beta}\varepsilon^{\alpha+1}\int^t_0 (1+s)^{-\gamma}ds,\text{ for }\gamma>1.
    \end{split}
\end{align*}
The crucial feature of this improvement is the integrability of the resulting time-decay factor over the entire positive time axis. Therefore, its contribution can be controlled uniformly in the final time. This allows the smallness threshold $\varepsilon$ for the initial data to be chosen independently of the final time and closes the global a priori estimate.\\
$\bullet$ \textbf{Lipschitz continuity through compensation by the collision frequency.} We next describe the Lipschitz estimate for the gain operator (see Proposition~\ref{prop:Lip prop}):
\begin{align*}
    \begin{split}
        \|\nu_{\alpha,\beta}(f)\mathcal{M}(f)-\nu_{\alpha,\beta}(g)\mathcal{M}(g)\|_{L^1_2}\leq C_{Lip}\varepsilon^\alpha\|f-g\|_{L^1_2(\mathbb{R}^6)}.
    \end{split}
\end{align*}
The key advantage of this estimate is that it does not require uniform positive lower bounds for the density or temperature. By contrast, in the standard analysis of the BGK model with constant collision frequency, Lipschitz estimates for the Maxwellian typically involve Lipschitz constants depending on positive lower bounds for these macroscopic quantities: 
\begin{align*}
    \begin{split}
        &|\mathcal{M}(f)-\mathcal{M}(g)|\cr
        &\leq |\rho_f-\rho_g|\int^1_0\left|\frac{\partial \mathcal{M}_h}{\partial \rho_h} \right|d\theta+|U_f-U_g|\int^1_0\left|\frac{\partial \mathcal{M}_h}{\partial U_h} \right|d\theta+|T_f-T_g|\int^1_0\left|\frac{\partial \mathcal{M}_h}{\partial T_h} \right|d\theta\cr
        &\leq \left(1+|U_h|^2+T_h+\frac{1}{T_h}+\frac{|U_h|^3}{T_h} \right)|(|\rho_f-\rho_g|+|U_f-U_g|+|T_f-T_g|).
    \end{split}
\end{align*}
Accordingly, suitable assumptions for the lower bound are often imposed on the initial data to obtain the required macroscopic bounds. Such assumptions, however, are not natural in the near-vacuum regime considered here. Our Lipschitz estimate is instead compatible with this regime, as its Lipschitz constant is controlled by only a weighted upper bound for the distribution function via Lemma~\ref{lem:Perthame ineq}:
\begin{align*}
    \begin{split}
        C_{Lip} &\sim C_{\alpha}\{\rho^\alpha_{h}T^{\beta}_{h}(1+|U_{h}|^2+T_{h})+ \rho^\alpha_{h}T^{\beta-1}_{h}+\rho^\alpha_{h}|U_{h}|^3T^{\beta-1}_{h}+\rho^\alpha_{h}|U_{h}|^4T^{\beta-1}_{h}\}\cr
        &\leq C_{q,\alpha}(\theta\|f\|_{w_q}+(1-\theta)\|g\|_{w_q})^\alpha\leq C_{q,\alpha}\varepsilon^\alpha,\text{ for }\theta\in[0,1]. 
    \end{split}
\end{align*}
Moreover, these Lipschitz constants are adapted from the derivatives of the gain and loss operators, which are given by $\nu_{\alpha,\beta}(f)\mathcal{M}(f)$ and $\nu_{\alpha,\beta}(f)f$ respectively, with respect to the macroscopic variables $(\rho_f,\rho_fU_f,\rho_f(|U_f|^2+3T_f)$. These bounds yield an $L^1_2$ stability estimate for any two solutions $f$ and $g$ in the solution class, constructed in Proposition~\ref{prop:cauchy prop}. Applying the same estimate to the Picard-type iteration introduced in \eqref{appr system}, we show that the approximate solutions form a Cauchy sequence on every finite time interval. Passing to the limit in the nonlinear terms then produces a global mild solution. The stability estimate also gives uniqueness and thereby establishes well-posedness for the BGK model with nontrivial collision frequency. To summarize, the global upper bounds obtained through dispersion and the stability estimates on finite time intervals thus play distinct, complementary roles. The decay estimate controls the accumulated effect of the collision frequency, while the structure of this frequency within the gain term makes stability estimates possible without uniform positive lower bounds for the density or temperature. Together, these two mechanisms complete the global well-posedness argument near vacuum. 

\subsection{Literature review}
We briefly review the well-posedness theory for BGK-type models. For constant collision frequencies, Perthame \cite{perthame1989global} established the existence of global weak solutions, and Perthame and Pulvirenti \cite{perthame1993weighted} developed weighted estimates and proved uniqueness. The uniqueness theory was extended to the whole space in \cite{mischler1996uniqueness}. Further developments include regularity results for both BGK and ES-BGK models: \cite{issautier1996convergence,yun2015classical}. The $L^p$ problem was studied in \cite{zhang2007lp}. In \cite{chen2018smooth,chen2016global,son2023bgk}, the infinite energy setting for the various BGK-type models was investigated. Incorporating mean-field interactions has also been studied in \cite{wei2012cauchy,zhang2010cauchy}. Related results concern the near-equilibrium regime \cite{bellouquid2003global} and the stationary problems \cite{brull2020stationary}.\\
\indent For nontrivial collision frequencies, the available well-posedness theory is more limited. As discussed in Section~\ref{S1-1}, existing results largely focused on perturbations of equilibrium and stationary configurations. In particular, near-equilibrium problems have been investigated in \cite{yun2010cauchy,yun2019ellipsoidal} for the BGK, ES-BGK, and polyatomic ES-BGK models. Indeed, the perturbation theories for initial data with large amplitude and large oscillations were established in \cite{bae2026large} and \cite{duan2017global}. The existence and uniqueness of a stationary solution have been studied in \cite{bang2016stationary}. Yun \cite{Yun2026} recently established the existence of global renormalized solutions to the BGK model with a physical collision frequency, assuming that the initial data have finite mass, momentum, energy, and entropy.\\
\indent Moreover, we offer the results in the near-vacuum regime for the Boltzmann equation. The near-vacuum regime roughly refers to
a solution class in which the solution decays both in space and velocity, and the dispersive effect prevails in the long run. A typical example is the traveling local Maxwellian. The first such results can be traced back to \cite{illner1984boltzmann}, in which
Illner and Shinbrot proved the existence and uniqueness of global-in-time solutions for the Boltzmann equation in the hard-sphere case when the initial data is bounded by a sufficiently small traveling local Maxwellian. Extensions to the polynomial(in $x$)-exponential(in $v$) mixed decay, and purely polynomial decay in both $x$ and $v$  were established respectively in \cite{bellomo1985cauchy, toscani1986non}. Toscani provided a rigorous proof of the $H$-theorem in \cite{toscani1987h} even when dispersive asymptotics prevail over thermal equilibrization in the near-vacuum regime. In \cite{toscani1988global}, the well-posedness of the Boltzmann equation was obtained when the initial data lie sufficiently close to a traveling local Maxwellian for the soft potential. The hard potential case was treated in \cite{palczewski1989global}. Ha \cite{ha20041, ha2005nonlinear} obtained the $L^1$-stability of the Boltzmann equation near vacuum. Glassey \cite{glassey2006global} identified the relativistic version of the near-vacuum regime and established the existence of the relativistic Boltzmann equation. Guo \cite{guo2001vlasov} studied global classical solutions of the Vlasov-Poisson-Boltzmann system near vacuum for soft potentials. Extension to the polynomial decay in both $x$ and $v$ was studied by Duan, Yang, and Zhu \cite{duan2006boltzmann}. In \cite{duan2006l1}, $L^1$-stability results of the Vlasov-Poisson-Boltzmann system near vacuum were considered. Chae, Ha, and Hwang \cite{chae2006time} introduced a collision potential to study the time-asymptotic behavior.
\newline
\newline
\indent This paper is organized as follows. In Section~\ref{S2}, we present a dispersive estimate for the collision frequency and the gain term of the BGK operator. In Section~\ref{S3}, we define the solution space and construct a sequence of approximate solutions through a Picard-type iteration. We then show that every iterate belongs to this class. In Section~\ref{S4}, we provide the Lipschitz continuity of the gain term and use it to prove that the sequence constructed in Section~\ref{S3} is Cauchy. Finally, Section~\ref{S5} is devoted to the proof of our main theorem.

\section{A priori estimates}\label{S2}
In this section, we introduce the technical lemma, which controls some specific structure of macroscopic fields using the distribution function. 
\begin{lemma}\label{lem:Perthame ineq}
    Let $\alpha\in(0,1]$ and $\beta\in[0,1]$, and assume that
    \begin{align*}
        \begin{split}
            3\alpha+2\beta\geq 2,\quad q>3+\frac{2(1+\beta)}{\alpha}.
        \end{split}
    \end{align*}
    Then, for nonnegative $f$, the following estimates hold:
    \begin{align*}
        \begin{split}
            &(1)\; \rho_f T^{-\frac{3}{2}}_f\leq C_q\|f\|_{w_q},\cr
            &(2)\; \rho_f (|U_f|^2+T_f)^{\frac{q-3}{2}}\leq C_q\|f\|_{w_q},\cr
            &(3)\; \frac{\rho_f|U_f|^{q+3}}{\{T_f(T_f+|U_f|^2)\}^{\frac{3}{2}}} \leq C_q\|f\|_{w_q}.
        \end{split}
    \end{align*}
    Moreover, we have
    \begin{align*}
        \begin{split}
            &(4)\; \rho^\alpha_fT^{\beta-1}_f\leq C_{q,\alpha} \|f\|^\alpha_{w_q},\cr
            &(5)\; \rho^\alpha_f|U_f|^mT^{\beta-1}_f\leq C_{q,\alpha} \|f\|^\alpha_{w_q},\text{ for }2<m<\alpha(q-3)+2-2\beta,\cr
            &(6)\;\rho^\alpha_fT^{\beta}_f(1+|U_f|^2+T_f)\leq C_{q,\alpha}\|f\|^\alpha_{w_q}.
        \end{split}
    \end{align*}    
\end{lemma}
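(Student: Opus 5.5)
The plan is to prove (1)--(3) by the classical Perthame--Pulvirenti splitting of velocity integrals, and then derive (4)--(6) by interpolating among (1), (2), (3) with exponents summing to $\alpha$.

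\textbf{Estimates (1)--(3).} For (1) and (2), fix $R>0$ and split each moment integral into $|v-U_f|<R$ and $|v-U_f|\ge R$.

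For (1), use
\begin{align*}
\rho_f\le \int_{|v-U_f|<R}\|f\|_{w_q}\,dv+R^{-2}\int |v-U_f|^2f\,dv\le CR^3\|f\|_{w_q}+3R^{-2}\rho_fT_f .
\end{align*}
This uses only $w_q\ge1$. Choosing $R^2=6T_f$ absorbs the second term and gives $\rho_f\le CT_f^{3/2}\|f\|_{w_q}$.

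For (2), write $|U_f|^2+3T_f=\rho_f^{-1}\int|v|^2f\,dv$ and split at $|v|=R$:
\begin{align*}
\rho_f(|U_f|^2+3T_f)\le CR^5\|f\|_{\infty}+\int_{|v|\ge R}|v|^{2}f\,dv .
\end{align*}
For the tail, use $|v|^2f\le |v|^{2-q}\|f\|_{w_q}$, which is integrable since $q>5$, and equals $CR^{5-q}\|f\|_{w_q}$. Optimizing in $R$ gives
\begin{align*}
\rho_f(|U_f|^2+T_f)\le C\|f\|_{w_q}^{5/q}\bigl(\rho_f(|U_f|^2+T_f)\bigr)^{(q-5)/q}\cdots .
\end{align*}
This is not quite the right form. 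A cleaner route is to interpolate $\rho_f\le CR^3\|f\|_{w_q}+R^{-2}\int|v|^2f$ and take $R^2\sim |U_f|^2+T_f$. For large $q$ it is better to use the truncation
\begin{align*}
\int|v|^2 f\le R^2\rho_f+\int_{|v|>R}|v|^{2}f\le R^2\rho_f+ CR^{5-q}\|f\|_{w_q}.
\end{align*}
Choosing $R^2=\tfrac12(|U_f|^2+3T_f)$ absorbs the first term into the left side and yields $\rho_f(|U_f|^2+T_f)\le C(|U_f|^2+T_f)^{(5-q)/2}\|f\|_{w_q}$, which is exactly (2).

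For (3): if $|U_f|^2\lesssim T_f$, then (3) follows from (1) times (2). Otherwise, use (2) together with the mass concentration near $U_f$: by Chebyshev, at least half of the mass lies in the ball $|v-U_f|\le 2\sqrt{3T_f}$. On that ball $|v|\gtrsim|U_f|$ (when $T_f\ll|U_f|^2$), so
\begin{align*}
\rho_f\lesssim T_f^{3/2}|U_f|^{-q}\|f\|_{w_q}.
\end{align*}
When $T_f\ll|U_f|^2$, the denominator of (3) is comparable to $T_f^{3/2}|U_f|^3$, so this bound is exactly (3).

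\textbf{Estimates (4)--(6).} These come from interpolation. Write $A=\rho_fT_f^{-3/2}$, $B=\rho_f(|U_f|^2+T_f)^{(q-3)/2}$ and $D=\rho_f|U_f|^{q}T_f^{-3/2}$; the last is bounded by (3) in the regime $|U_f|^2\gg T_f$. Take exponents $a,b,d\ge0$ with $a+b+d=\alpha$, so that $\rho_f^\alpha T_f^{\beta-1}|U_f|^m\le A^aB^bD^d\le C\|f\|_{w_q}^\alpha$.

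For (4) and (6), use only $A$ and $B$, with $|U_f|^2+T_f\ge T_f$ (and, for (6), also $\ge |U_f|^2$). Matching the powers of $T_f$ requires
\begin{align*}
-\tfrac32a+\tfrac{q-3}{2}b=\beta-1 \quad\text{(resp. }\beta,\ \beta+1\text{)},\qquad a+b=\alpha .
\end{align*}
This has a solution with $a,b\ge0$ precisely when $-\tfrac32\alpha\le \beta-1$, i.e. $3\alpha+2\beta\ge2$, and $\beta+1\le \tfrac{q-3}{2}\alpha$, i.e. $q\ge3+2(1+\beta)/\alpha$. These are exactly the stated hypotheses. For (6) one needs both the endpoint $\beta$ and the endpoint $\beta+1$, the latter covering the terms $|U_f|^2$ and $T_f$. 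Since $|U_f|^2+T_f$ appears inside $B$, the factor $1+|U_f|^2+T_f$ is handled by treating the constant $1$ and the quadratic part separately.

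For (5), split into cases. If $|U_f|^2\le T_f$, then $|U_f|^m\le T_f^{m/2}$ and the argument reduces to (4) and (6) type interpolation with $T$-exponent $\beta-1+m/2$. This needs $\beta-1+m/2\le\tfrac{q-3}{2}\alpha$, which is exactly $m<\alpha(q-3)+2-2\beta$. If $|U_f|^2\ge T_f$, interpolate $B$ (with $|U_f|^{q-3}$) against $A$, giving $|U_f|^{(q-3)b}T_f^{-3a/2}$. Choose $b$ with $(q-3)b=m$ and $-\tfrac32a\le \beta-1$; any excess power of $T_f$ is converted to a power of $|U_f|$ using $T_f\le|U_f|^2$ in the appropriate direction. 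If necessary, use $D$ to supply the extra $T_f^{-3/2}$.

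\textbf{Main obstacle.} The hardest part is the bookkeeping in (5): checking that the range $2<m<\alpha(q-3)+2-2\beta$ is exactly where nonnegative exponents exist in both regimes. The lower bound $m>2$ is presumably only there so that the later Lipschitz constants make sense, not because it is needed here. I would solve the two linear systems explicitly for $(a,b,d)$ and verify the signs.
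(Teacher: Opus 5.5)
Your arguments for (1)--(4) and (6) are sound. The splitting proofs of (1)--(3) reproduce what the paper simply cites from Perthame--Pulvirenti. One slip: in the regime $|U_f|^2\lesssim T_f$, (3) follows from (2) alone, not from ``(1) times (2)'', which would give $\|f\|_{w_q}^2$. The $A^aB^b$ interpolation for (4) and (6) is a clean alternative to the paper's split $T_f\gtrless 1$.

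The gap is in (5) for $|U_f|^2\ge T_f$, the step you flagged as the main obstacle. Your recipe, $(q-3)b=m$ together with $-\tfrac32 a\le\beta-1$, is not feasible in general. Each of $A,B,D$ is linear in $\rho_f$, so $a+b+d=\alpha$ is forced. The total power of $T_f$ you can produce is then $-\tfrac32(a+d)=-\tfrac32(\alpha-b)$. Letting $T_f\to0$ with $|U_f|$ fixed forces $-\tfrac32(\alpha-b)\le\beta-1$, i.e. $b\le\tfrac13(3\alpha+2\beta-2)$. Since the scaling $(T_f,|U_f|^2)\mapsto(\lambda T_f,\lambda|U_f|^2)$ preserves the regime, the degrees must also match exactly. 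So ``converting excess powers of $T_f$ into powers of $|U_f|$'' only relocates the mismatch. At the admissible endpoint $3\alpha+2\beta=2$ (e.g. $\alpha=\tfrac23$, $\beta=0$, $m=3$) this forces $b=0$, so $b=m/(q-3)>0$ fails. Adding $D$ does not rescue it, because $D$ draws on the same $\alpha$-budget.

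The missing idea is to drop $B$ entirely in this regime. Both the $T_f^{-3/2}$ factors and the extra powers of $|U_f|$ must come from $A$ and $D$, which each carry $T_f^{-3/2}$. Concretely, using $3\alpha+2\beta\ge 2$ and $T_f\le|U_f|^2$,
\[
\rho_f^\alpha|U_f|^mT_f^{\beta-1}=A^\alpha|U_f|^mT_f^{\frac{3\alpha}{2}+\beta-1}\le A^\alpha|U_f|^{k}=A^{\alpha-\frac kq}D^{\frac kq},\qquad k=m+3\alpha+2\beta-2 .
\]
Here $D\le C_q\|f\|_{w_q}$ by (3) since $T_f+|U_f|^2\le 2|U_f|^2$. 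The exponents are admissible exactly when $0\le k\le q\alpha$, i.e. $m\le\alpha(q-3)+2-2\beta$. This is the paper's argument.

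Your case $|U_f|^2\le T_f$ is fine. Your remark that $m>2$ is not needed here is correct for the interpolation route: any $m\ge0$ gives $k\ge0$. The paper uses $m>2$ only to get $\tfrac m2+\beta-1>0$ in its $T_f\lessgtr 1$ split.
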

\begin{proof}
    The first three assertions have already been proven in \cite{perthame1993weighted}. Thus, we will only show estimates $(4)-(6)$.\\
    $\bullet$ \textbf{Estimate $(4)$:} For $T_f\geq 1$, we have
    \begin{align*}
        \begin{split}
            \rho^\alpha_fT^{\beta-1}_f\leq \rho^\alpha_f\leq  \left(\|f\|_{w_q}\int_{\mathbb{R}^3}\frac{dv}{(1+|v|^2)^{\frac{q}{2}}}\right)^{\alpha}\leq C_{q,\alpha} \|f\|^\alpha_{w_q}.
        \end{split}
    \end{align*}
    For $T_f\leq 1$, estimate $(1)$ gives
    \begin{align*}
        \begin{split}
            \rho^\alpha_fT^{\beta-1}_f=\left(\rho_fT^{-\frac{3}{2}}_f\right)^{\alpha}T^{\frac{3\alpha}{2}+\beta-1}_f\leq C_{q,\alpha}\|f\|^\alpha_{w_q}T^{\frac{3\alpha}{2}+\beta-1}_f\leq C_{q,\alpha}\|f\|^\alpha_{w_q},
        \end{split}
    \end{align*}
    where we used $3\alpha+2\beta\geq 2$.\\
    $\bullet$ \textbf{Estimate $(5)$:} We divide the cases into two parts such that $(i)\;|U_f|^2\leq T_f$, and $(ii)\; |U_f|^2>T_f$. In the former case $(i)$, we have
    \begin{align*}
        \begin{split}
            \rho^\alpha_f|U_f|^m T^{\beta-1}_f\leq \rho^\alpha_fT^{\frac{m}{2}+\beta-1}_f.
        \end{split}
    \end{align*}
    Since $2<m<\alpha(q-3)+2-2\beta$, we see 
    \begin{align*}
        \begin{split}
            0<\frac{m}{2}+\beta-1<\frac{\alpha(q-3)}{2}.
        \end{split}
    \end{align*}
    Thus, estimate $(2)$ gives
    \begin{align*}
        \begin{split}
           &\rho^\alpha_fT^{\frac{m}{2}+\beta-1}_f\leq \rho^\alpha_f\leq C_{q,\alpha}\|f\|^\alpha_{w_q}\quad(T_f\leq 1),\cr
           &\rho^\alpha_fT^{\frac{m}{2}+\beta-1}_f\leq \left(\rho_fT^{\frac{q-3}{2}}_f\right)^{\alpha}\leq C_{q,\alpha}\|f\|^\alpha_{w_q}\quad(T_f> 1).
        \end{split}
    \end{align*}
    In the latter case $(ii)$, we observe
    \begin{align*}
        \begin{split}
            \rho^\alpha_f|U_f|^mT^{\beta-1}_f=\left(\frac{\rho_f}{T^{\frac{3}{2}}_f}\right)^{\alpha} |U_f|^{m}T^{\frac{3\alpha}{2}+\beta-1}_f\leq \left(\frac{\rho_f}{T^{\frac{3}{2}}_f}\right)^{\alpha} |U_f|^{m+3\alpha+2\beta-2}.     
        \end{split}
    \end{align*}
    Interpolating estimates (1) and (3), we get
    \begin{align*}
        \begin{split}
           \left(\frac{\rho_f}{T^{\frac{3}{2}}_f}\right)^{\alpha} |U_f|^{m+3\alpha+2\beta-2}\leq \left(\frac{\rho_f}{T^{\frac{3}{2}}_f} \right)^{\alpha-\frac{m+3\alpha+2\beta-2}{q}}\left(\frac{\rho_f|U_f|^q}{T^{\frac{3}{2}}_f} \right)^{\frac{m+3\alpha+2\beta-2}{q}}\leq C_{q,\alpha}\|f\|^\alpha_{w_q},   
        \end{split}
    \end{align*}
    where we used $0<\frac{m+3\alpha+2\beta-2}{q}<\alpha$. This proves $(5)$.\\
    $\bullet$ \textbf{Estimate (6): }Since $q>3+\frac{2(1+\beta)}{\alpha}$, we have
    \begin{align*}
        \begin{split}
            1+\beta<\frac{\alpha(q-3)}{2}.
        \end{split}
    \end{align*}
    Therefore, we observe
    \begin{align*}
        \begin{split}
            \rho^\alpha_fT^{\beta}_f(1+|U_f|^2+T_f)&\leq \rho^\alpha_f(1+|U_f|^2+T_f)^{1+\beta}\cr
            &\leq \rho^\alpha_f(1+|U_f|^2+T_f)^{\frac{\alpha(q-3)}{2}}\cr
            &\leq C_{q,\alpha}\left\{\rho^\alpha_f+(\rho_f(|U_f|^2+T_f)^{\frac{q-3}{2}})^{\alpha}\right\}.
        \end{split}
    \end{align*}
    Then, $\rho_f\leq C_q\|f\|_{w_q}$ and estimate $(2)$ give the desired result.
\end{proof}

\begin{corollary}\cite{perthame1993weighted}\label{cor:Mx lemma}
    For $q>5$, we have
    \begin{align*}
        \begin{split}
            \|\mathcal{M}(f)(t)\|_{w_q}\leq C_{q}\|f(t)\|_{w_q}.
        \end{split}
    \end{align*}
\end{corollary}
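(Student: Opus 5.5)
We need to show $\|\mathcal{M}(f)\|_{w_q}\le C_q\|f\|_{w_q}$ for $q>5$. Since $\mathcal{M}(f)$ is explicit, the plan is to bound $w_q(v)\mathcal{M}(f)(v)$ pointwise in $x$ by a combination of the three quantities in estimates (1)--(3) of Lemma~\ref{lem:Perthame ineq}. Those three estimates use only $q>5$ (they come from \cite{perthame1993weighted}), not the extra hypotheses on $\alpha,\beta$. Fix $(t,x)$ and write $\rho,U,T$. Because $w_q(v)\le C_q(1+|v|^q)$, it suffices to bound
\begin{align*}
\sup_v \frac{\rho}{T^{3/2}}e^{-\frac{|v-U|^2}{2T}}
\quad\text{and}\quad
\sup_v \frac{\rho}{T^{3/2}}|v|^q e^{-\frac{|v-U|^2}{2T}}.
\end{align*}
The first is at most $\rho T^{-3/2}$, which estimate (1) controls by $C_q\|f\|_{w_q}$.

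For the second term, use $|v|^q\le C_q(|v-U|^q+|U|^q)$. The piece with $|v-U|^q$ becomes
\begin{align*}
\frac{\rho}{T^{3/2}}|v-U|^qe^{-\frac{|v-U|^2}{2T}}\le C_q\,\rho T^{\frac{q-3}{2}},
\end{align*}
since $s^{q}e^{-s^2/2T}\le C_qT^{q/2}$. Estimate (2) then bounds this by $C_q\|f\|_{w_q}$. The remaining piece is $\rho|U|^qT^{-3/2}$, and this is the main obstacle: estimate (3) only gives $\rho|U|^{q+3}\{T(T+|U|^2)\}^{-3/2}\le C_q\|f\|_{w_q}$, which carries an extra factor.

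To handle that extra factor, split into two cases.
\begin{itemize}
\item If $|U|^2\le T$, then $\rho|U|^qT^{-3/2}\le \rho T^{\frac{q-3}{2}}$, and estimate (2) applies.
\item If $|U|^2>T$, then $T+|U|^2\le 2|U|^2$, so $\{T(T+|U|^2)\}^{3/2}\le 2^{3/2}T^{3/2}|U|^3$. Hence
\begin{align*}
\frac{\rho|U|^q}{T^{3/2}}=\frac{\rho|U|^{q+3}}{T^{3/2}|U|^3}\le 2^{3/2}\frac{\rho|U|^{q+3}}{\{T(T+|U|^2)\}^{3/2}}\le C_q\|f\|_{w_q}
\end{align*}
by estimate (3).
\end{itemize}

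Collecting the terms and taking the supremum over $x$ and $v$ gives the claim. This is pure algebra once (1)--(3) are available. If one wanted a self-contained proof, the real work would be re-deriving (1)--(3) from \cite{perthame1993weighted}; there, $q>5$ is exactly what makes $\int|v|^2w_q^{-1}dv$ finite.
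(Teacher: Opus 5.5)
Your proof is correct and is essentially the argument the paper defers to via \cite{perthame1993weighted}: you split $|v|^q\lesssim |v-U|^q+|U|^q$, control $\rho T^{-3/2}$ by estimate (1) and $\rho T^{\frac{q-3}{2}}$ by (2), and control $\rho|U|^qT^{-3/2}$ by (2) or (3) according to whether $|U|^2\le T$ or $|U|^2>T$. Your remark that (1)--(3) need only $q>5$, and not the $\alpha,\beta$ hypotheses of Lemma~\ref{lem:Perthame ineq}, is also accurate.
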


In the following lemma, we establish the dispersive estimate for the collision frequency $\nu_{\alpha,\beta}(f)$ and the gain operator $\nu_{\alpha,\beta}(f)\mathcal{M}(f)$. 
\begin{lemma}\label{lem:density and Maxwellian est}
    Let $\alpha\in(1/3,1]$ and $\beta\in[0,1]$, and assume that
    \begin{align*}
        \begin{split}
            q>\max\left(3+\frac{2(1+\beta)}{\alpha},3+\frac{6\beta}{3\alpha-1} \right),\quad r>3.
        \end{split}
    \end{align*}
    Then, for nonnegative $f$, we have
    \begin{align}\label{lem d and M d}
        \begin{split}
            \rho_f(t,x)\leq C_r\|f(t)\|_{W_r}(1+t^2)^{-\frac{3}{2}}.   
        \end{split}
    \end{align}
    \begin{align}\label{lem d and M nu}
        \begin{split}
            \nu_{\alpha,\beta}(f)\leq C_{r,q}\|f(t)\|^{\alpha-\frac{2\beta}{q-3}}_{W_r}\|f(t)\|^{\frac{2\beta}{q-3}}_{w_q}(1+t^2)^{-\frac{3\alpha}{2}+\frac{3\beta}{q-3}}.
        \end{split}
    \end{align}
    \begin{align}\label{lem d and M M1}
        \begin{split}
            \|\nu_{\alpha,\beta}(f)\mathcal{M}(f)(t)\|_{w_q}\leq C_{r,q}\|f(t)\|^{\alpha-\frac{2\beta}{q-3}}_{W_r}\|f(t)\|^{1+\frac{2\beta}{q-3}}_{w_q}(1+t^2)^{-\frac{3\alpha}{2}+\frac{3\beta}{q-3}}.
        \end{split}
    \end{align}
    Moreover, for $r>5$, we have
    \begin{align}\label{lem d and M M1-1}
        \begin{split}
            \|\nu_{\alpha,\beta}(f)\mathcal{M}(f)(t)\|_{W_r}\leq C_{r,q}\|f(t)\|^{\alpha+1-\frac{2\beta}{q-3}}_{W_r}\|f(t)\|^{\frac{2\beta}{q-3}}_{w_q}(1+t^2)^{-\frac{3\alpha}{2}+\frac{3\beta}{q-3}}.  
        \end{split}
    \end{align}
    For $3<r<3\alpha+2-\frac{3(3\alpha+2\beta-1)}{q}$, define 
    \begin{align*}
        \begin{split}
           \gamma=3(\alpha+1)-r-\frac{3(2\beta+r-3)}{q-3}>1.
        \end{split}
    \end{align*}
    Then, we have
    \begin{align}\label{lem d and M M2}
        \begin{split}
            \|\nu_{\alpha,\beta}(f)\mathcal{M}(f)(t)\|_{W_r}\leq C_{r,q,\beta}(\|f(t)\|^{\alpha+1}_{W_r}+\|f(t)\|^{\alpha+1}_{w_q})(1+t^2)^{-\frac{\gamma}{2}}.  
        \end{split}
    \end{align}
\end{lemma}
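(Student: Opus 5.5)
We prove the five estimates in order; each builds on the previous ones.

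\textbf{Density decay \eqref{lem d and M d}.} Since $f\le \|f(t)\|_{W_r}W_r^{-1}$, we have
\[
\rho_f(t,x)\le \|f(t)\|_{W_r}\int_{\mathbb{R}^3}(1+|x-vt|^2+|v|^2)^{-r/2}dv .
\]
For $t\le1$ we use $W_r\ge w_r$, and the integral is finite because $r>3$. For $t\ge1$ we use $W_r\ge(1+|x-vt|^2)^{r/2}$ and substitute $y=x-vt$, $dv=t^{-3}dy$. The integral is then bounded by $C_rt^{-3}$, which gives the factor $(1+t^2)^{-3/2}$.

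\textbf{Collision frequency \eqref{lem d and M nu}.} We interpolate between the decaying density and estimate $(2)$ of Lemma~\ref{lem:Perthame ineq}. That estimate gives $\rho_fT_f^{\frac{q-3}{2}}\le C_q\|f\|_{w_q}$, so $T_f^{\beta}\le (C\|f\|_{w_q}/\rho_f)^{\frac{2\beta}{q-3}}$. Hence
\[
\nu_{\alpha,\beta}(f)=\rho_f^\alpha T_f^\beta\le C\rho_f^{\alpha-\frac{2\beta}{q-3}}\|f\|_{w_q}^{\frac{2\beta}{q-3}} .
\]
The exponent $\alpha-\frac{2\beta}{q-3}$ is positive because $q>3+\frac{6\beta}{3\alpha-1}$ implies $\frac{2\beta}{q-3}<\frac{3\alpha-1}{3}<\alpha$. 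Inserting \eqref{lem d and M d} yields the rate $-\frac32\big(\alpha-\frac{2\beta}{q-3}\big)$, which is the stated exponent.

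\textbf{Gain term in $w_q$ \eqref{lem d and M M1}.} This follows at once by multiplying \eqref{lem d and M nu} with Corollary~\ref{cor:Mx lemma}, which applies since $q>5$.

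\textbf{Gain term in $W_r$.} Here we need $W_r\mathcal{M}(f)$, and we split the weight as
\[
W_r\le C\big(1+|x-vt|^2\big)^{r/2}+C\,w_r(v).
\]

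For \eqref{lem d and M M1-1}, with $r>5$, the $w_r$ part is handled by the Perthame--Pulvirenti bound with $r$ in place of $q$. For the $|x-vt|$ part, I would mimic the proof of Corollary~\ref{cor:Mx lemma} in the transported variable. Pointwise at fixed $(t,x)$, view $\mathcal{M}$ as a Gaussian in $v$ and compare its moments with those of $f$ weighted by $W_r$. The shift $y=x-vt$ turns the $v$-moments of $f$ into $y$-moments, the variable in which $W_r$ acts as $w_r$. This should give $\|\mathcal{M}(f)\|_{W_r}\le C_r\|f\|_{W_r}$ for $r>5$. Multiplying by \eqref{lem d and M nu} then gives \eqref{lem d and M M1-1}.

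For \eqref{lem d and M M2}, with $3<r$ near $3$, that comparison is unavailable. Instead we bound $|x-vt|^r\le C(|x|^r+t^r|v|^r)$ crudely, or better as $(1+t)^r w_r(v)$, so that
\[
\|\nu\mathcal{M}\|_{W_r}\le C(1+t)^r\,\nu\,\|w_r\mathcal{M}\|_{L^\infty}.
\]
The Gaussian bound $w_r\mathcal{M}\lesssim \rho T^{-3/2}(1+|U|^2+T)^{r/2}$ involves no $t$. We interpolate it between estimates $(1)$--$(3)$ of Lemma~\ref{lem:Perthame ineq} and the decaying density, giving powers $\|f\|_{w_q}^{a}\,(\|f\|_{W_r}(1+t)^{-3})^{b}$. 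The exponents are balanced so that $3b$ minus $r$ leaves the rate $\gamma=3(\alpha+1)-r-\frac{3(2\beta+r-3)}{q-3}$. Young's inequality then collapses the product to $\|f\|_{W_r}^{\alpha+1}+\|f\|_{w_q}^{\alpha+1}$. The condition $r<3\alpha+2-\frac{3(3\alpha+2\beta-1)}{q}$ should be exactly what makes $\gamma>1$.

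I expect this last bookkeeping to be the main obstacle: choosing the interpolation powers of $T$, $|U|$ and $\rho$ so that the exponents add up and $\gamma>1$. I would first treat the regimes $T\le1$ and $T\ge1$ separately, and likewise $|U|^2\le T$ and $|U|^2>T$, exactly as in the proof of Lemma~\ref{lem:Perthame ineq}.
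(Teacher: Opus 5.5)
Your arguments for \eqref{lem d and M d}, \eqref{lem d and M nu} and \eqref{lem d and M M1} are correct. Your sketch for \eqref{lem d and M M1-1} can also be completed. For $t>0$ and fixed $x$, the function $G(y)=f(t,x,(x-y)/t)$ has $\rho_G=t^3\rho_f$, $U_G=x-tU_f$ and $T_G=t^2T_f$. Hence $\mathcal{M}(G)(x-vt)=\mathcal{M}(f)(t,x,v)$, and Corollary~\ref{cor:Mx lemma} in the variable $y$ controls the $(1+|x-vt|^2)^{r/2}$ piece. This is the same affine covariance of $f\mapsto\mathcal{M}(f)$ that the paper uses once, with $z=\sqrt{1+t^2}\,(v-\frac{tx}{1+t^2})$, for the whole weight.

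The genuine gap is in \eqref{lem d and M M2}, for two reasons.
First, $W_r(t,x,v)\le C(1+t)^r w_r(v)$ is false. At $v=0$ the left side is $(1+|x|^2)^{r/2}$, which is unbounded in $x$, and the crude split $|x|^r+t^r|v|^r$ leaves a factor $|x|^r$ that nothing controls.

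Second, and more seriously, even a correct bound of the form $(1+t)^r\nu_{\alpha,\beta}(f)\|w_r\mathcal{M}(f)\|_{L^\infty}$ is too lossy. In general no choice of interpolation exponents can rescue it. Take $f(t,x,v)=\varepsilon\chi(x)\mathbf{1}_{\{|v|<(1+t)^{-1}\}}$ with $0\le\chi\le1$, $\chi(0)=1$ and $\chi$ supported in $\{|x|\le1\}$. Then $\|f(t)\|_{W_r}+\|f(t)\|_{w_q}\le C\varepsilon$. At $x=0$ one has $U_f=0$, $\rho_f\sim\varepsilon(1+t)^{-3}$ and $T_f=\frac15(1+t)^{-2}$. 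Hence $\mathcal{M}(f)(t,0,0)\sim\rho_fT_f^{-3/2}\sim\varepsilon$ and $\nu_{\alpha,\beta}(f)(t,0)\sim\varepsilon^\alpha(1+t)^{-3\alpha-2\beta}$. Your intermediate quantity is therefore $\gtrsim\varepsilon^{1+\alpha}(1+t)^{r-3\alpha-2\beta}$, which is not integrable unless $r<3\alpha+2\beta-1$. For $\alpha=1$, $\beta=0$ it even grows, although the admissible range $3<r<5-6/q$ is nonempty there. The loss comes from placing the whole growth $(1+t)^r$ on the peak of the Maxwellian, where $\rho_fT_f^{-3/2}$ carries no decay.

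The missing idea is to center the weight at $U_f$. Since $W_r^{1/r}(t,x,v)$ is the Euclidean length of $(1,x-tv,v)$, one has $W_r^{1/r}(t,x,v)\le W_r^{1/r}(t,x,U_f)+\sqrt{1+t^2}\,|v-U_f|$. So $(1+t^2)^{r/2}$ only multiplies $|v-U_f|^r$, which the Gaussian turns into $T_f^{r/2}$. This produces $(1+t^2)^{r/2}\rho_fT_f^{\frac{r-3}{2}}$, with a nonnegative power of $T_f$ since $r>3$.

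The remaining term $\rho_fT_f^{-3/2}W_r(t,x,U_f)$ is moved back onto $f$ by a Chebyshev argument. At least half of the mass lies in $\{|v-U_f|\le\sqrt{6T_f}\}$. On that ball, $W_r(t,x,v)\ge2^{-r}W_r(t,x,U_f)$ as long as $W_r^{2/r}(t,x,U_f)\ge24(1+t^2)T_f$, which gives $\rho_fT_f^{-3/2}W_r(t,x,U_f)\le C_r\|f(t)\|_{W_r}$. In the opposite case the term is bounded by $C_r(1+t^2)^{r/2}\rho_fT_f^{\frac{r-3}{2}}$ directly.

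Only after this step does interpolating $\rho_f^{\alpha+1}T_f^{\beta+\frac{r-3}{2}}$ between Lemma~\ref{lem:Perthame ineq}$_{(2)}$ and \eqref{lem d and M d} give exactly $(1+t^2)^{-\gamma/2}$. The condition $\gamma>1$ is then equivalent to the stated upper bound on $r$.
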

\begin{proof}
    To prove \eqref{lem d and M d}, we let 
    \begin{align}\label{cov}
        \begin{split}
            z=\sqrt{1+t^2}\left(v-\frac{tx}{1+t^2} \right).
        \end{split}
    \end{align}
    By direct calculation, we see
    \begin{align*}
        \begin{split}
            1+|x-vt|^2+|v|^2=1+\frac{|x|^2}{1+t^2}+|z|^2.
        \end{split}
    \end{align*}
    Using this, we have
   \begin{align*}
        \begin{split}
            f(t,x,v)&\leq \frac{ \|f(t)\|_{W_r}}{(1+|x-vt|^2+|v|^2)^{\frac{r}{2}}}\cr
            &\leq C_s \|f(t)\|_{W_r}\left(1+\frac{|x|^2}{1+t^2}+|z|^2 \right)^{-\frac{r}{2}}\cr
            &\leq C_r\|f(t)\|_{W_r}(1+|z|^2)^{-\frac{r}{2}}.
        \end{split}
    \end{align*}
    Therefore, we integrate over $v$ on both sides to obtain
    \begin{align*}
        \begin{split}
            \rho_f(t,x)\leq C_r\|f(t)\|_{W_r}(1+t^2)^{-\frac{3}{2}}\int_{\mathbb{R}^3}(1+|z|^2)^{-\frac{r}{2}}dz\leq C_r\|f(t)\|_{W_r}(1+t^2)^{-\frac{3}{2}},
        \end{split}
    \end{align*}
    via 
    \begin{align*}
        \begin{split}
            \int_{\mathbb{R}^3}(1+|z|^2)^{-\frac{r}{2}}dz\leq C_{r}<\infty,\text{ for }r>3.        
        \end{split}
    \end{align*}
    Next, we derive the estimate for the collision frequency. Since $0\leq 2\beta/(q-3)<\alpha$, Lemma~\ref{lem:Perthame ineq}$_{(2)}$ and \eqref{lem d and M d} lead to
    \begin{align*}
        \begin{split}
            \nu_{\alpha,\beta}(f)&=\rho^{\alpha-\frac{2\beta}{q-3}}_f(\rho_f T^{\frac{q-3}{2}}_f)^{\frac{2\beta}{q-3}}\cr
            &\leq C_{q}\rho^{\alpha-\frac{2\beta}{q-3}}_f\|f(t)\|^{\frac{2\beta}{q-3}}_{w_q}\cr
            &\leq C_{r,q}\|f(t)\|^{\alpha-\frac{2\beta}{q-3}}_{W_r}\|f(t)\|^{\frac{2\beta}{q-3}}_{w_q}(1+t^2)^{-\frac{3\alpha}{2}+\frac{3\beta}{q-3}}.
        \end{split}
    \end{align*}
    The proof for \eqref{lem d and M M1} directly follows from Corollary~\ref{cor:Mx lemma} and \eqref{lem d and M nu}:
    \begin{align*}
        \begin{split}
            |w_q(v)(\nu_{\alpha,\beta}(f)\mathcal{M}(f))(t,x,v)|&\leq |\nu_{\alpha,\beta}(f)||w_q\mathcal{M}(f)|\cr
            &\leq C_{r,q}\|f(t)\|^{\alpha-\frac{2\beta}{q-3}}_{W_r}\|f(t)\|^{1+\frac{2\beta}{q-3}}_{w_q}(1+t^2)^{-\frac{3\alpha}{2}+\frac{3\beta}{q-3}}.
        \end{split}
    \end{align*}
    To prove \eqref{lem d and M M1-1}, it is enough to show that
    \begin{align*}
        \begin{split}
            \|\mathcal{M}(f)(t)\|_{W_r}\leq C_r\|f(t)\|_{W_r},\text{ for }r>5.
        \end{split}
    \end{align*}
    Recalling \eqref{cov}, and we let $F(z)$ by
    \begin{align*}
        \begin{split}
            F(z)=f\left(t,x,\frac{z}{\sqrt{1+t^2}}+\frac{tx}{1+t^2}\right).
        \end{split}
    \end{align*}
    Then, we can see that
    \begin{align*}
        \begin{split}
            \rho_F=(1+t^2)^{\frac{3}{2}}\rho_f,\quad U_F=\sqrt{1+t^2}\left(U_f-\frac{tx}{1+t^2} \right),\quad T_F=(1+t^2)T_f.
        \end{split}
    \end{align*}
    This implies
    \begin{align*}
        \begin{split}
            \mathcal{M}(F)(z)=\mathcal{M}(f)(t,x,v).
        \end{split}
    \end{align*}
    Using this, we observe
    \begin{align}\label{large r est}
        \begin{split}
            \sup_{v}W_r(t,x,v)\mathcal{M}(f)(t,x,v)&=\sup_z\left(1+\frac{|x|^2}{1+t^2}+|z|^2 \right)^{\frac{r}{2}}\mathcal{M}(F)(z)\cr
            &\leq C_r\left\{\left(1+\frac{|x|^2}{1+t^2} \right)^{\frac{r}{2}}\|\mathcal{M}(F)\|_{L^{\infty}_{z}}+\|\mathcal{M}(F)\|_{w_r} \right\}\cr
            &\leq  C_{r,q}\left\{\left(1+\frac{|x|^2}{1+t^2} \right)^{\frac{r}{2}}\|F\|_{L^{\infty}_{z}}+\|F\|_{w_r} \right\}\cr
            &\leq C_r\sup_z\left(1+\frac{|x|^2}{1+t^2}+|z|^2 \right)^{\frac{r}{2}}F(z)\cr
            &=C_r\sup_v W_r(t,x,v)f(t,x,v).
        \end{split}
    \end{align}
    Here we used Corollary~\ref{cor:Mx lemma} and the unweighted Maxwellian estimate such that 
    \begin{align*}
        \begin{split}
            &\|\mathcal{M}(F)\|_{L^{\infty}_z}\leq C\|F\|_{L^{\infty}_z},\cr
            &\|\mathcal{M}(F)\|_{w_r}\leq C_r\|F\|_{w_r},\text{ for }r>5.
        \end{split}
    \end{align*}
    Therefore, taking $\sup_x$ on both sides of \eqref{large r est}, we get \eqref{lem d and M M1-1}. Lastly, we derive \eqref{lem d and M M2}. By the triangle inequality, we have
    \begin{align*}
        \begin{split}
            |W^{\frac{1}{r}}_r(t,x,v)-W^{\frac{1}{r}}_r(t,x,U_f)|&\leq |(1,x-tv,v)-(1,x-tU_f,U_f)|=\sqrt{1+t^2}|v-U_f|.
        \end{split}
    \end{align*}
    This implies
    \begin{align*}
        \begin{split}
            W_r(t,x,v)\leq C_r(W_r(t,x,U_f)+(1+t^2)^{\frac{r}{2}}|v-U_f|^r).
        \end{split}
    \end{align*}
    Multiplying $W_r(t,x,v)$ to $\mathcal{M}(f)$, we get
    \begin{align}\label{WM est}
        \begin{split}
            W_r(t,x,v)\mathcal{M}(f)(t,x,v)&\leq C_r\frac{\rho_f}{T^{\frac{3}{2}}_f}\left(W_r(t,x,U)e^{-\frac{|v-U_f|^2}{2T_f}}+(1+t^2)^{\frac{r}{2}}|v-U|^re^{-\frac{|v-U_f|^2}{2T_f}}\right)\cr
            &\leq C_r\frac{\rho_f}{T^{\frac{3}{2}}_f}\left(W_r(t,x,U_f)+(1+t^2)^\frac{r}{2}T^{\frac{r}{2}}_f\right)\cr
            &\leq C_r\left(\frac{\rho_f}{T^{\frac{3}{2}}_f}W_r(t,x,U_f)+(1+t^2)^{\frac{r}{2}}\rho T^{\frac{r-3}{2}}_f\right),
        \end{split}
    \end{align}
    where we used $x^re^{-x^2}\leq C_r$ such that
    \begin{align*}
        \begin{split}
            |v-U_f|^r e^{-\frac{|v-U_f|^2}{2T_f}}\leq (2T_f)^{\frac{r}{2}}\left( \frac{|v-U_f|^2}{2T_f}\right)^{\frac{r}{2}}e^{-\frac{|v-U_f|^2}{2T_f}}\leq C_rT^{\frac{r}{2}}_f.
        \end{split}
    \end{align*}
    Now we claim 
    \begin{align}\label{claim WM}
        \begin{split}
            \frac{\rho_f}{T^{\frac{3}{2}}_f}W_r(t,x,U_f)\leq C_r\left[\|f(t)\|_{W_r}+(1+t^2)^{\frac{r}{2}}\rho_f T^{\frac{r-3}{2}}_f\right].        
        \end{split}
    \end{align}
    Inserting \eqref{claim WM} into \eqref{WM est} and multiplying $\nu_{\alpha,\beta}(f)$, we have
    \begin{align}\label{W rho M est}
        \begin{split}
            W_r(t,x,v)\nu_{\alpha,\beta}(f)\mathcal{M}(f)&\leq C_r\left[\rho^\alpha_f T^{\beta}_f\|f(t)\|_{W_r}+(1+t^2)^{\frac{r}{2}}\rho^{\alpha+1}_f T^{\beta+\frac{r-3}{2}}_f \right].
        \end{split}
    \end{align}
    For the second term, we observe
    \begin{align}\label{rho T est}
        \begin{split}
            \rho^{\alpha+1}_f T^{\beta+\frac{r-3}{2}}_f=\rho^{\alpha+1-\frac{2\beta+r-3}{q-3}}_f(\rho_f T_f^{\frac{q-3}{2}})^{\frac{2\beta+r-3}{q-3}}\leq C_{r,q}\rho^{\alpha+1-\frac{2\beta+r-3}{q-3}}_f\|f(t)\|^{\frac{2\beta+r-3}{q-3}}_{w_q},
        \end{split}
    \end{align}
    via Lemma~\ref{lem:Perthame ineq}$_{(2)}$. Plugging \eqref{rho T est} into \eqref{W rho M est}, \eqref{lem d and M d} and \eqref{lem d and M nu} yield
    \begin{align*}
        \begin{split}
            W_r(t,x,v)\nu_{\alpha,\beta}(f)\mathcal{M}(f)&\leq C_{r,q}\|f(t)\|^{\alpha+1-\frac{2\beta}{q-3}}_{W_r}\|f(t)\|^{\frac{2\beta}{q-3}}_{w_q}(1+t^2)^{-\frac{3\alpha}{2}+\frac{3\beta}{q-3}}\cr
            &+C_{r,q}\|f(t)\|^{\alpha+1-\frac{2\beta+r-3}{q-3}}_{W_r}\|f(t)\|^{\frac{2\beta+r-3}{q-3}}_{w_q}(1+t^2)^{-\frac{\gamma}{2}}.
        \end{split}
    \end{align*}
    Here we used
    \begin{align*}
        \begin{split}
            \frac{r}{2}-\frac{3}{2}\left(\alpha+1-\frac{2\beta+r-3}{q-3} \right)=-\frac{\gamma}{2}.
        \end{split}
    \end{align*}
    Then, we apply the AM-GM inequality to get
    \begin{align*}
        \begin{split}
            |W_r(t,x,v)\nu_{\alpha,\beta}(f)\mathcal{M}(f)|\leq C_{r,q,\alpha,\beta}[\|f(t)\|^{\alpha+1}_{W_r}+\|f(t)\|^{\alpha+1}_{w_q}](1+t^2)^{-\frac{\gamma}{2}}.  
        \end{split}
    \end{align*}
    It remains to prove the claim \eqref{claim WM}. To this end, we have
    \begin{align*}
        \begin{split}
            \int_{|v-U_f|>\sqrt{6T_f}}fdv&\leq \frac{1}{6T_f}\int_{|v-U_f|>\sqrt{6T_f}}|v-U_f|^2fdv\cr
            &\leq \frac{1}{6T_f}\int_{\mathbb{R}^3}|v-U_f|^2fdv\cr
            &\leq \frac{3\rho_f T_f}{6T_f}=\frac{\rho_f}{2}.
        \end{split}
    \end{align*}
    This gives
    \begin{align*}
        \begin{split}
            \int_{|v-U_f|\leq \sqrt{6T_f}}fdv=\rho_f- \int_{|v-U_f|>\sqrt{6T_f}}fdv\geq \frac{\rho_f}{2}.
        \end{split}
    \end{align*}
    We first consider the case $W^{\frac{2}{r}}_r(t,x,U_f)\geq 24(1+t^2)T_f$. For $|v-U_f|\leq \sqrt{6T_f}$, this condition implies
    \begin{align*}
        \begin{split}
            \sqrt{1+t^2}|v-U_f|\leq \sqrt{6(1+t^2)T_f}\leq \frac{1}{2}W^{\frac{1}{r}}_r(t,x,U_f),
        \end{split}
    \end{align*}
    and thereby
    \begin{align*}
        \begin{split}
    W^{\frac{1}{r}}_r(t,x,v)&\geq W^{\frac{1}{r}}_r(t,x,U_f)-\sqrt{1+t^2}|v-U_f|\cr
            &\geq \frac{1}{2}W^{\frac{1}{r}}_r(t,x,U_f).
        \end{split}
    \end{align*}
    Then, we evaluate
    \begin{align}\label{WM claim pf 1}
        \begin{split}
            \frac{\rho_f}{2}\leq \int_{ |v-U_f|\leq \sqrt{6T_f} }f(t,x,v)dv\leq \frac{2^r\|f(t)\|_{W_r}}{W_r(t,x,U_f)}\int_{ |v-U_f|\leq \sqrt{6T_f} }dv\leq C_r\frac{\|f(t)\|_{W_r}}{W_r(t,x,U_f)}T^{\frac{3}{2}}_f.
        \end{split}
    \end{align}
    Now we take into account the another case $W^{\frac{2}{r}}_r(t,x,U_f)\leq 24(1+t^2)T_f$. By direct calculation, we get
    \begin{align}\label{WM claim pf 2}
        \begin{split}
            \frac{\rho_f}{T^{\frac{3}{2}}_f}W_r(t,x,U_f)\leq C_r (1+t^2)^{\frac{r}{2}}\rho_f T^{\frac{r-3}{2}}_f.
        \end{split}
    \end{align}
\end{proof}
Combining \eqref{WM claim pf 1} and \eqref{WM claim pf 2}, we obtain \eqref{claim WM}, and this completes the proof of this lemma.

\section{Construction of the solution space}\label{S3}
In this section, we construct the Picard-type iteration of sequences that approximate \eqref{BGK model}. For $n\geq 1$, we consider the following system:
\begin{align}\label{appr system}
    \begin{split}
        &\partial_t f_{n+1}+v\cdot\nabla_x f_{n+1}=\nu_{\alpha,\beta}(f_n)(\mathcal{M}(f_n)-f_{n+1}),\cr
        &f_{n+1}(0,x,v)=f_0(x,v),\cr
        &f_1=f_0(x-vt,v),
    \end{split}
\end{align}
Then, \eqref{appr system} can be rewritten as
\begin{align}\label{sequence mild form}
    \begin{split}
        f_{n+1}(t,x,v)&=e^{-\int^t_0 \nu_{\alpha,\beta}(f_n)(\tau,x-v(t-\tau))d\tau}f_0(x-vt,v)\cr
        &+\int^t_0 e^{-\int^t_s \nu_{\alpha,\beta}(f_n)(\tau,x-v(t-\tau))d\tau}(\nu_{\alpha,\beta}(f_n)\mathcal{M}(f_n))(s,x-v(t-s),v)ds.
    \end{split}
\end{align}
Now we design the solution space to capture that $f_{n}$ lies in this space for all $n\geq 1$.
\begin{definition}\label{def:Sol space}
   Let
    \begin{align*}
        \begin{split}
            &\frac{1}{3}<\alpha\leq 1,\quad 0\leq \beta\leq 1,\quad 3\alpha+2\beta\geq 2,\cr
            &q>\max\left(3+\frac{2(1+\beta)}{\alpha},3+\frac{6\beta}{3\alpha-1} \right),\cr
            &r\in \left(3,3\alpha+2-\frac{3(3\alpha+2\beta-1)}{q} \right)\cup(5,\infty).
        \end{split}
    \end{align*}
    We define our solution space $\Omega$ with the metric $d(f,g)=\underset{t}{\sup}\|f-g\|_{L^1_2(\mathbb{R}^6)}$ as
    \begin{align*}
        \begin{split}
            \Omega=\{f\in C([0,\infty);L^1_2(\mathbb{R}^6))\;|\; f\text{ satisfies }(A_1)-(A_4) \},
        \end{split}
    \end{align*}
    where conditions $(A_1)-(A_4)$ are given by
    \begin{itemize}
        \item $(A_1):$ $f$ is nonnegative.
        \item $(A_2):$ $f$ is uniformly bounded for sufficiently small $\varepsilon>0$:
        \begin{align*}
            \begin{split}
            \sup_t\|f(t)\|_{w_q}\leq \varepsilon,\quad\sup_t\|f(t)\|_{W_r}\leq \varepsilon.
            \end{split}
        \end{align*}
        \item $(A_3):$ The collision frequency $\nu_{\alpha,\beta}(f)$ has the dispersive effect:
        \begin{align*}
            \begin{split}
                \nu_{\alpha,\beta}(f)\leq C_{r,q}\varepsilon^\alpha(1+t^2)^{-\frac{3\alpha}{2}+\frac{3\beta}{q-3}}.
            \end{split}
        \end{align*}
        \item $(A_4):$ $f$ is uniform bounded in $L^1_2$ for sufficiently small $\varepsilon>0$:
    \begin{align*}
        \begin{split}
            \sup_t\|f(t)\|_{L^1_2}\leq 2\|f_0\|_{L^1_2}.
        \end{split}
    \end{align*}
    \end{itemize}
\end{definition}

In the remaining part, we conclude this section by showing that every $f_n$ belongs to the solution space $\Omega$.
\begin{proposition}\label{prop:Inv prop}
   Assume that $f^n$ belongs to $\Omega$. Then, for sufficiently small $\varepsilon$, $f^{n+1}$ lies in $\Omega$, that is, $f_{n+1}$ satisfies conditions $(A_1)-(A_4)$ in Definition \ref{def:Sol space}.
\end{proposition}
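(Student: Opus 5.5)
We work directly with the mild form \eqref{sequence mild form} of $f_{n+1}$ and verify $(A_1)$–$(A_4)$ one at a time, using only that $f_n\in\Omega$.

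\textbf{Nonnegativity $(A_1)$.} Since $f_n\ge0$, we have $\nu_{\alpha,\beta}(f_n)\ge0$ and $\mathcal{M}(f_n)\ge0$. Together with $f_0\ge0$ and the positive exponential factors, every term in \eqref{sequence mild form} is nonnegative, so $f_{n+1}\ge0$.

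\textbf{Weighted bounds $(A_2)$.} We drop the exponential factors, since each is at most $1$. We multiply by $w_q(v)$, which is constant along the characteristic $s\mapsto(x-v(t-s),v)$, and apply \eqref{lem d and M M1} to $f_n$ with $\|f_n\|_{w_q},\|f_n\|_{W_r}\le\varepsilon$. This gives
\[
\|f_{n+1}(t)\|_{w_q}\le \varepsilon^2+C_{r,q}\varepsilon^{1+\alpha}\int_0^\infty(1+s^2)^{-\frac{3\alpha}{2}+\frac{3\beta}{q-3}}ds .
\]
The integral is finite exactly when $3\alpha-\frac{6\beta}{q-3}>1$, which is equivalent to $q>3+\frac{6\beta}{3\alpha-1}$. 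For $W_r$ we use $W_r(t,x,v)=W_r(0,x-vt,v)=W_r(s,x-v(t-s),v)$. The initial term is then bounded by $\|f_0\|_{W_r}\le\varepsilon^2$. For the source term we use \eqref{lem d and M M1-1} when $r>5$, which gives the same integrable decay. When $3<r<3\alpha+2-\frac{3(3\alpha+2\beta-1)}{q}$ we use \eqref{lem d and M M2}, which gives the integrable decay $(1+s^2)^{-\gamma/2}$ with $\gamma>1$. In both cases
\[
\|f_{n+1}\|_{w_q},\ \|f_{n+1}\|_{W_r}\le \varepsilon^2+C\varepsilon^{1+\alpha}\le\varepsilon
\]
once $\varepsilon$ is small, because $\alpha>0$.

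\textbf{Collision frequency $(A_3)$.} This now follows from \eqref{lem d and M nu} applied to $f_{n+1}$ together with the bounds just obtained. Since $\alpha-\frac{2\beta}{q-3}+\frac{2\beta}{q-3}=\alpha$, the powers of $\varepsilon$ combine to $\varepsilon^\alpha$.

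\textbf{$L^1_2$ bound $(A_4)$ and continuity.} We integrate the mild form against $(1+|v|^2)$ over $\mathbb{R}^6$, using that the map $x\mapsto x-v(t-s)$ preserves Lebesgue measure. This gives
\[
\|f_{n+1}(t)\|_{L^1_2}\le\|f_0\|_{L^1_2}+\int_0^t\int\nu(f_n)\,(1+|v|^2)\mathcal{M}(f_n)\,dv\,dx\,ds .
\]
The inner $v$-integral equals $\nu(f_n)\rho_n(1+|U_n|^2+3T_n)$. Its $x$-integral is at most $\sup_x\nu(f_n)\cdot\|f_n(s)\|_{L^1_2}$, and by $(A_3)$ and $(A_4)$ for $f_n$ this is at most $C\varepsilon^\alpha(1+s^2)^{-\frac{3\alpha}{2}+\frac{3\beta}{q-3}}\,2\|f_0\|_{L^1_2}$. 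Integrating in time, the total is at most $\|f_0\|_{L^1_2}(1+C\varepsilon^\alpha)\le2\|f_0\|_{L^1_2}$ for small $\varepsilon$. Continuity in time with values in $L^1_2$ comes from continuity of translation in $L^1$ for the free-transport term. The Duhamel term is continuous by dominated convergence, since its integrand is bounded by the integrable quantity above.

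\textbf{Main obstacle.} The hardest step is $(A_2)$ in the $W_r$ norm for the intermediate range $3<r\le5$. There Corollary~\ref{cor:Mx lemma} gives no control of $\mathcal{M}$ in $w_r$, so we must rely on \eqref{lem d and M M2}. We must check that the smallness $\varepsilon^{1+\alpha}$ survives the mixed norms there and that the constants are uniform in $n$. This also fixes $\varepsilon$ independently of $n$ and $t$, which is what makes the induction close.
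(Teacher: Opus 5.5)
Your proposal is correct and follows the paper's proof essentially step by step. You get nonnegativity from the mild form. You get the $w_q$ and $W_r$ bounds from \eqref{lem d and M M1}, \eqref{lem d and M M1-1} and \eqref{lem d and M M2}, using the invariance of $W_r$ along characteristics. You get $(A_3)$ from \eqref{lem d and M nu}, and $(A_4)$ from measure preservation together with the moment identity for $\mathcal{M}(f_n)$.

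You also check continuity in $L^1_2$, which the paper omits. For the Duhamel term, that check is cleanest in the characteristic variable $y=x-vt$ combined with continuity of translations, since dominated convergence alone does not give pointwise convergence in $x$.
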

\begin{proof}
    Let $f^{n+1}$ be the solution to \eqref{appr system}, defined by \eqref{sequence mild form}.\\
    $\bullet$ \textbf{Condition $A_1:$ non-negativity.} The non-negativity follows directly from \eqref{sequence mild form}.\\
    $\bullet$ \textbf{Condition $A_2:$ Uniform boundedness in weighted $L^{\infty}$ norms.} Multiplying $w_q(v)$ on both sides of \eqref{sequence mild form}, we have
    \begin{align*}
        \begin{split}
            |w_q(v)f_{n+1}(t,x,v)|&\leq |w_q(v)f_0(x-vt,v)|+\int^t_0 |w_q(v)(\nu_{\alpha,\beta}(f_n)\mathcal{M}(f_n))(s,x-v(t-s),v)|ds\cr
            &\equiv I_1 + I_2,
        \end{split}
    \end{align*}
    where we used $e^{-\int^t_0\nu_{\alpha,\beta}(f_n)d\tau}\leq 1$ since $\nu_{\alpha,\beta}(f_n)\geq 0$ by condition $A_1$.
    For $I_1$, the assumption on the initial condition gives
    \begin{align*}
        \begin{split}
            I_1\leq \sup_{x,v\in\mathbb{R}^3}|w_q(v)f_0(x,v)|\leq \varepsilon^2.
        \end{split}
    \end{align*}
    For $I_2$, condition $(A2)$ and \eqref{lem d and M M1} in Lemma~\ref{lem:density and Maxwellian est} lead to
    \begin{align*}
        \begin{split}
            I_2&\leq \int^t_0 \|\nu_{\alpha,\beta}(f_n)\mathcal{M}(f_n)(s)\|_{w_q}ds\leq C_{r,q}\int^t_0\|f_n(s)\|^{\alpha-\frac{2\beta}{q-3}}_{W_r}\|f_n(s)\|^{1+\frac{2\beta}{q-3}}_{w_q}(1+s^2)^{-\frac{3\alpha}{2}+\frac{3\beta}{q-3}}ds\cr
            &\leq C_{r,q}\varepsilon^{\alpha+1}\int^{\infty}_0 (1+s^2)^{-\frac{3\alpha}{2}+\frac{3\beta}{q-3}}ds\cr
            &\leq C_{r,q,\alpha,\beta}\varepsilon^{\alpha+1},
        \end{split}
    \end{align*}
    where we used the integrability of $s$-integral since $3\alpha-\frac{6\beta}{q-3}>1$.  Combining the estimates for $I_1$ and $I_2$, we have
    \begin{align*}
        \begin{split}
            \|f_{n+1}(t)\|_{w,q}\leq \varepsilon^2+C_{r,q,\alpha,\beta}\varepsilon^{\alpha+1}\leq \varepsilon,
        \end{split}
    \end{align*}
    for sufficiently small $\varepsilon>0$. On the other hand, we multiply $W_r(t,x,v)$ on both sides of \eqref{sequence mild form} to get
    \begin{align*}
        \begin{split}
            |W_r(t,x,v)f_{n+1}(t,x,v)|&\leq |W_r(0,x-vt,v)f_0(x-vt,v)|\cr
            &+\int^t_0 |W_r(s,x-v(t-s),v)(\nu_{\alpha,\beta}(f_n)\mathcal{M}(f_n))(s,x-v(t-s),v)|ds\cr
            &\equiv J_1 + J_2,
        \end{split}
    \end{align*}
    where we used $e^{-\int^t_0\nu_{\alpha,\beta}(f_n)d\tau}\leq 1$ and the invariance property of $W_r(t,x,v)$ on the characteristic:
    \begin{align*}
        \begin{split}
            \frac{d}{ds}W_r(s,x-v(t-s),v)=0,\text{ for }s\in[0,t].
        \end{split}
    \end{align*}
    The estimate for $J_1$ comes from the hypothesis on the initial data as follows:
    \begin{align*}
        \begin{split}
            J_1\leq \sup_{x,v\in\mathbb{R}^3}|W_r(0,x,v)f_0(x,v)|\leq \varepsilon^2.
        \end{split}
    \end{align*}
    We derive $J_2$ using \eqref{lem d and M M1-1} and \eqref{lem d and M M2} in  Lemma~\ref{lem:density and Maxwellian est} and condition $(A_2)$. For $r>5$, we have
    \begin{align*}
        \begin{split}
            J_2&\leq \int^t_0 \|\nu_{\alpha,\beta}(f_n)\mathcal{M}(f_n)(s)\|_{W_r}ds\cr
            &\leq C_{r,q}\int^t_0\|f_n(s)\|^{\alpha+1-\frac{2\beta}{q-3}}_{W_r}\|f_n(s)\|^{\frac{2\beta}{q-3}}_{w_q}(1+s^2)^{-\frac{3\alpha}{2}+\frac{3\beta}{q-3}}ds\cr
            &\leq C_{r,q}\varepsilon^{\alpha+1}\int^{\infty}_0 (1+s^2)^{-\frac{3\alpha}{2}+\frac{3\beta}{q-3}}ds\cr
            &\leq C_{r,q,\alpha,\beta}\varepsilon^{\alpha+1}.
        \end{split}
    \end{align*}
    For $r\in(3,3\alpha+2-3(3\alpha+2\beta-1)/q)$, we get
    \begin{align*}
        \begin{split}
            J_2&\leq \int^t_0 \|\nu_{\alpha,\beta}(f_n)\mathcal{M}(f_n)(s)\|_{W_r}ds\cr
            &\leq C_{r,q}\int^t_0(\|f_n(s)\|^{\alpha+1}_{W_r}+\|f_n(s)\|^{\alpha+1}_{w_q})(1+s^2)^{-\frac{\gamma}{2}}ds\cr
            &\leq C_{r,q}\varepsilon^{\alpha+1}\int^{\infty}_0 (1+s^2)^{-\frac{\gamma}{2}}ds\cr
            &\leq C_{r,q,\gamma}\varepsilon^{\alpha+1}.
        \end{split}
    \end{align*}
    We collect the estimate for $J_1$ and $J_2$ to conclude
    \begin{align*}
        \begin{split}
            \|f_{n+1}(t)\|_{W_r}\leq \varepsilon^2+C_{r,q,\alpha,\beta,\gamma}\varepsilon^{\alpha+1}\leq \varepsilon,
        \end{split}
    \end{align*}
    for sufficiently small $\varepsilon>0$.
    \\
     $\bullet$ \textbf{Condition $A_3:$ Dispersive effect of the collision frequency.} We apply \eqref{lem d and M nu} in Lemma~\ref{lem:density and Maxwellian est} to deduce
     \begin{align*}
         \begin{split}
             \nu_{\alpha,\beta}(f_{n+1})(t,x)\leq C_{r,q} \|f_{n+1}(t)\|^{\alpha-\frac{2\beta}{q-3}}_{W_r}\|f_{n+1}(t)\|^{\frac{2\beta}{q-3}}_{w_q}(1+t^2)^{-\frac{3\alpha}{2}+\frac{3\beta}{q-3}}\leq C_r\varepsilon^\alpha(1+t^2)^{-\frac{3\alpha}{2}+\frac{3\beta}{q-3}}.
         \end{split}
     \end{align*}
     $\bullet$ \textbf{Condition $A_4:$ Uniform boundedness in $L^1_2$.} Multiplying $(1+|v|^2)$ and integrating over $x$ and $v$ on \eqref{sequence mild form}, we have
     \begin{align*}
         \begin{split}
             \int_{\mathbb{R}^6}(1+|v|^2)f_{n+1}(t,x,v)dxdv&\leq \int_{\mathbb{R}^6}(1+|v|^2)f_0(x,v)dxdv\cr
             &+\int^{t}_0\int_{\mathbb{R}^6}(1+|v|^2)(\nu_{\alpha,\beta}(f_n)\mathcal{M}(f_n))(s,x,v)dxdvds,
         \end{split}
     \end{align*}
     where we used $e^{-\int^t_s\nu_{\alpha,\beta}(f_n)d\tau}\leq 1$ and the measure-preserving property of $(x,v)\rightarrow (x-v(t-s),v)$. The first term is bounded by
     \begin{align*}
         \begin{split}
            \int_{\mathbb{R}^6}(1+|v|^2)f_0(x,v)dxdv\leq \|f_0\|_{L^1_2}.   
         \end{split}
     \end{align*}
     For the second term, we utilize conditions $(A_2)$, $(A_3)$, $(A_4)$, and the cancellation property of the Maxwellian such that
     \begin{align*}
         \begin{split}
          &\int^{t}_0\int_{\mathbb{R}^6}(1+|v|^2)(\nu_{\alpha,\beta}(f_n)\mathcal{M}(f_n))(s,x,v)dxdvds\cr
          &\leq C_{r,q} \int^t_0(1+s^2)^{-\frac{3\alpha}{2}+\frac{3\beta}{q-3}}\|f_{n}(s)\|^{\alpha-\frac{2\beta}{q-3}}_{W_r}\|f_{n}(s)\|^{\frac{2\beta}{q-3}}_{w_q}\left(\int_{\mathbb{R}^6}(1+|v|^2)f_n(s,x,v)dxdv\right)ds\cr
          &\leq C_{r,q,\alpha,\beta}\varepsilon^\alpha\sup_s\|f_n(s)\|_{L^1_2}.
         \end{split}
     \end{align*}
     Combining these, we conclude
     \begin{align*}
         \begin{split}
             \sup_t\|f_{n+1}(t)\|_{L^1_2}\leq (1+C_{r,q,\alpha,\beta}\varepsilon^\alpha)\|f_0\|_{L^1_2}\leq 2\|f_0\|_{L^1_2},
         \end{split}
     \end{align*}
     for sufficiently small $\varepsilon>0$. This completes the proof.
\end{proof}

\begin{corollary}
    Let $f_n$ be defined in \eqref{appr system}. Then, $f_n$ lies in $\Omega$ for all $n\geq 1$.
\end{corollary}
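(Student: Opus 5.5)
The plan is to argue by induction on $n$, with Proposition~\ref{prop:Inv prop} supplying the inductive step. All of the real work is therefore in the base case $n=1$, together with a check that the smallness threshold for $\varepsilon$ does not depend on $n$.

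\textbf{Base case.} By \eqref{appr system}, $f_1(t,x,v)=f_0(x-vt,v)$ is the free transport of $f_0$. I will verify $(A_1)$--$(A_4)$ directly.
\begin{itemize}
\item $(A_1)$ holds because $f_0\geq 0$.
\item For $(A_2)$, note that $w_q(v)$ does not depend on $x$, so $\|f_1(t)\|_{w_q}=\|f_0\|_{w_q}\leq\varepsilon^2\leq\varepsilon$. The weight $W_r$ is invariant along characteristics, $W_r(t,x,v)=W_r(0,x-vt,v)$, which gives $\|f_1(t)\|_{W_r}=\|f_0\|_{W_r}\leq\varepsilon^2\leq\varepsilon$.
\item $(A_3)$ then follows from \eqref{lem d and M nu} in Lemma~\ref{lem:density and Maxwellian est}, exactly as in the proof of Proposition~\ref{prop:Inv prop}. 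The constant $C_{r,q}$ appearing in $(A_3)$ is the constant of that lemma.
\item For $(A_4)$, the map $(x,v)\mapsto(x-vt,v)$ preserves measure, so $\|f_1(t)\|_{L^1_2}=\|f_0\|_{L^1_2}\leq 2\|f_0\|_{L^1_2}$.
\item Continuity $f_1\in C([0,\infty);L^1_2)$ follows from continuity of translations in $L^1$. Specifically, approximate $(1+|v|^2)f_0$ by $C_c$ functions and use dominated convergence.
\end{itemize}

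\textbf{Inductive step.} Assume $f_n\in\Omega$. Proposition~\ref{prop:Inv prop} gives $(A_1)$--$(A_4)$ for $f_{n+1}$. The smallness requirements in that proof are $\varepsilon^2+C\varepsilon^{\alpha+1}\leq\varepsilon$ and $1+C\varepsilon^\alpha\leq 2$. The constants $C$ there depend only on $\alpha,\beta,r,q$, not on $n$, so a single $\varepsilon$ works for every $n$.

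The one item Proposition~\ref{prop:Inv prop} does not explicitly check is $f_{n+1}\in C([0,\infty);L^1_2)$. I would obtain it from the mild form \eqref{sequence mild form}. The source term $\nu_{\alpha,\beta}(f_n)\mathcal{M}(f_n)$ lies in $L^1_2$ with a bound that is uniform in time, because of $(A_3)$ and the moment identity for $\mathcal{M}$. The time integral is therefore continuous in $t$. The exponential factor is continuous in $t$ pointwise and bounded by $1$, so dominated convergence handles the remaining terms.

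I expect this continuity check to be the only mildly delicate point. Everything else is a direct application of the earlier results, and the induction then gives $f_n\in\Omega$ for all $n\geq1$.
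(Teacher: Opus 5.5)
Your proposal is correct and follows the paper's argument: you check directly that the free-transported datum $f_1$ lies in $\Omega$, then induct through Proposition~\ref{prop:Inv prop}, and your treatment of the base case and of the $n$-independence of $\varepsilon$ fills in what the paper simply calls easy. The one loose point is the continuity check, which the paper omits entirely: pointwise continuity in $t$ of the exponential factor is not justified in the variables $(x,v)$, since no spatial regularity of $\nu_{\alpha,\beta}(f_n)$ is available. It does hold in the Lagrangian variables $y=x-vt$, and from there dominated convergence plus the strong continuity of free transport on $L^1_2$ give $f_{n+1}\in C([0,\infty);L^1_2)$.
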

\begin{proof}
    Since $f_1=f_0(x-vt,v)$, we can easily show that $f_1\in \Omega$. Then, Proposition~\ref{prop:Inv prop} gives the desired result via inductive argument.
\end{proof}

\section{Lipschitz property for the relaxation operator}\label{S4}
In this section, we provide the Lipschitz property of the gain operator $\nu_{\alpha,\beta}(f)\mathcal{M}(f)$ and the loss operator $\nu_{\alpha,\beta}(f)f$. We then establish the limit function $f$ which satisfies $f_n\rightarrow f$ in $L^1_2$ using the Cauchy estimate. Throughout this section, we assume the parameter conditions
\begin{align*}
        \begin{split}
            &\frac{1}{3}<\alpha\leq 1,\quad 0\leq \beta\leq 1,\quad 3\alpha+2\beta\geq 2,\cr
            &q>\max\left(3+\frac{2(1+\beta)}{\alpha},3+\frac{6\beta}{3\alpha-1} \right),\cr
            &r\in \left(3,3\alpha+2-\frac{3(3\alpha+2\beta-1)}{q} \right)\cup(5,\infty).
        \end{split}
    \end{align*}
\begin{lemma}\label{element property of gaussian lemma}
    Let $\mathcal{M}(f)$ is defined in \eqref{Maxwellian def}. Then, we have
    \begin{align*}
        \begin{split}
            &(1)\;\int_{\mathbb{R}^3} \mathcal{M}(f)(1+|v|^2)dv\leq C\rho_f(1+|U_f|^2+T_f).\cr
            &(2)\;T^{\frac{1}{2}}_f\int_{\mathbb{R}^3} \left|\frac{\partial \mathcal{M}(f)}{\partial U_f}\right|(1+|v|^2)dv\leq C\rho_f(1+|U_f|^2+T_f).\cr
            &(3)\;T_f\int_{\mathbb{R}^3} \left|\frac{\partial\mathcal{M}(f)}{\partial T_f}\right|(1+|v|^2)dv\leq C\rho_f(1+|U_f|^2+T_f).
        \end{split}
    \end{align*}
\end{lemma}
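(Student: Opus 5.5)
The plan is to use the change of variables $v = U_f + \sqrt{T_f}\,\xi$ in all three integrals. This turns $\mathcal{M}(f)\,dv$ into $\rho_f (2\pi)^{-3/2} e^{-|\xi|^2/2}\,d\xi$, so every estimate reduces to Gaussian moments in $\xi$. The one elementary pointwise bound needed throughout is
\begin{align*}
    \begin{split}
        1+|v|^2 \leq 1+2|U_f|^2+2T_f|\xi|^2 \leq 2(1+|U_f|^2+T_f)(1+|\xi|^2).
    \end{split}
\end{align*}

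\textbf{Estimate (1).} After the substitution, (1) follows immediately: the left side is at most $2\rho_f(1+|U_f|^2+T_f)$ times a finite Gaussian moment. One could also compute it exactly as $\rho_f(1+|U_f|^2+3T_f)$, since $\mathcal{M}(f)$ shares the first moments with $f$ by construction.

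\textbf{Estimate (2).} Differentiating the Maxwellian in $U_f$ gives
\begin{align*}
    \begin{split}
        \frac{\partial \mathcal{M}(f)}{\partial U_f}=\frac{v-U_f}{T_f}\mathcal{M}(f),
    \end{split}
\end{align*}
so that
\begin{align*}
    \begin{split}
        T_f^{\frac12}\left|\frac{\partial \mathcal{M}(f)}{\partial U_f}\right| = \frac{|v-U_f|}{\sqrt{T_f}}\mathcal{M}(f) = |\xi|\,\mathcal{M}(f).
    \end{split}
\end{align*}
Applying the same substitution bounds the integral by $C\rho_f(1+|U_f|^2+T_f)\int |\xi|(1+|\xi|^2)e^{-|\xi|^2/2}\,d\xi$.

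\textbf{Estimate (3).} Differentiating in $T_f$ gives
\begin{align*}
    \begin{split}
        \frac{\partial \mathcal{M}(f)}{\partial T_f}=\left(-\frac{3}{2T_f}+\frac{|v-U_f|^2}{2T_f^2}\right)\mathcal{M}(f),
    \end{split}
\end{align*}
so $T_f\,|\partial_{T_f}\mathcal{M}(f)| \le \tfrac12(3+|\xi|^2)\,\mathcal{M}(f)$. The same argument then applies, now with the Gaussian moment $\int(3+|\xi|^2)(1+|\xi|^2)e^{-|\xi|^2/2}\,d\xi$.

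There is no real obstacle. The only point requiring care is to note that the prefactors $T_f^{1/2}$ and $T_f$ exactly cancel the singular factors $T_f^{-1/2}$ and $T_f^{-1}$ produced by differentiation. As a result the constant $C$ is absolute and requires no lower bound on $\rho_f$ or $T_f$, which is precisely what the later Lipschitz estimate needs. The degenerate case $T_f=0$ can be excluded, or understood by the convention that $\mathcal{M}(f)$ is then undefined or zero.
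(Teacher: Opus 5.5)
Your proposal is correct and follows essentially the same route as the paper: the substitution $v=U_f+\sqrt{T_f}\,w$, the pointwise bound $1+|v|^2\le C(1+|U_f|^2+T_f)(1+|w|^2)$, and finiteness of the Gaussian moments. For (1), the paper uses the exact moment identity $\int\mathcal{M}(f)(1+|v|^2)\,dv=\rho_f(1+|U_f|^2+3T_f)$, which you also note as an alternative to the substitution.
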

\begin{proof}
    The first assertion directly follows from the mass and energy conservation:
    \begin{align*}
        \begin{split}
            \int_{\mathbb{R}^3}\mathcal{M}(f)(1+|v|^2)dv=\int_{\mathbb{R}^3}f(1+|v|^2)dv=\rho_f+(\rho|U_f|^2+3\rho_f T_f).
        \end{split}
    \end{align*}
    To show the second and third assertions, we let $v=U_f+\sqrt{T_f}w$ for $w\in\mathbb{R}^3$. Then, the Maxwellian $\mathcal{M}(w)$ can be written as
    \begin{align*}
        \begin{split}
            \mathcal{M}(w)=\frac{\rho}{(2\pi T)^{\frac{3}{2}}}\exp\left(-\frac{|w|^2}{2}\right).
        \end{split}
    \end{align*}
    Using this, we have
    \begin{align*}
        \begin{split}
            T^{\frac{1}{2}}_f\int_{\mathbb{R}^3}\left|\frac{\partial \mathcal{M}(f)}{\partial U_f} \right|(1+|v|^2)dv&\leq T^{\frac{1}{2}}_f\int_{\mathbb{R}^3}\mathcal{M}(f)\frac{|v-U_f|}{T_f}(1+|v|^2)dv\cr
            &\leq \frac{\rho_f}{(2\pi)^{\frac{3}{2}}}\int_{\mathbb{R}^3}|w|(1+|U_f+\sqrt{T_f}w|^2)\exp\left(-\frac{|w|^2}{2}\right)dw\cr
            &\leq C\rho_f(1+|U_f|^2 +T_f)\int_{\mathbb{R}^3}(|w|+|w|^3)e^{-\frac{|w|^2}{2}}dw\cr
            &\leq C\rho_f(1+|U_f|^2 +T_f),
        \end{split}
    \end{align*}
    where we used
    \begin{align*}
        \begin{split}
            \left|\frac{\partial \mathcal{M}(f)}{\partial U_f}\right|=\frac{|v-U_f|}{T_f}\mathcal{M}(f).
        \end{split}
    \end{align*}
    Lastly, we deduce
    \begin{align*}
        \begin{split}
            T_f\int_{\mathbb{R}^3} \left|\frac{\partial\mathcal{M}(f)}{\partial T_f}\right|(1+|v|^2)dv&\leq T_f\int_{\mathbb{R}^3}\left|\frac{|v-U_f|^2}{2T^2_f}-\frac{3}{2T_f} \right|\mathcal{M}(f)(1+|v|^2)dv\cr
            &=\frac{\rho_f T_f}{(2\pi)^{\frac{3}{2}}}\int_{\mathbb{R}^3}\left| \frac{|w|^2}{2T_f}-\frac{3}{2T_f}\right|(1+|U_f+\sqrt{T_f}w|^2)e^{-\frac{|w|^2}{2}}dw\cr
            &\leq C\rho_f(1+|U_f|^2+T_f)\int_{\mathbb{R}^3}(1+|w|^4)e^{-\frac{|w|^2}{2}}dw\cr
            &\leq C\rho_f(1+|U_f|^2+T_f).
        \end{split}
    \end{align*}
    We utilized the following direct calculation:
    \begin{align*}
        \begin{split}
            \left|\frac{\partial \mathcal{M}(f)}{\partial T_f} \right|=\left|\frac{|v-U_f|^2}{2T^2_f}-\frac{3}{2T_f} \right|\mathcal{M}(f).
        \end{split}
    \end{align*}
    This completes the proof.
\end{proof}

\begin{proposition}\label{prop:Lip prop}
    Let $f$ and $g$ belong to $\Omega$. Then, we have
    \begin{align*}
        \begin{split}
            \|\nu_{\alpha,\beta}(f)\mathcal{M}(f)-\nu_{\alpha,\beta}(g)\mathcal{M}(g)\|_{L^1_2}+\|\nu_{\alpha,\beta}(f)f-\nu_{\alpha,\beta}(g)g\|_{L^1_2}\leq C_{Lip}\varepsilon^\alpha\|f-g\|_{L^1_2(\mathbb{R}^6)},
        \end{split}
    \end{align*}
    where $C_{Lip}$ is some constant which depends on $q$, $\alpha$, and $\beta$.
\end{proposition}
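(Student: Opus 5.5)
The plan is to work pointwise in $x$ and linearize along the segment $h_\theta=\theta f+(1-\theta)g$, $\theta\in[0,1]$. Since $\Omega$ is convex, each $h_\theta$ is nonnegative and satisfies $\|h_\theta\|_{w_q}\le\varepsilon$. Its macroscopic fields are generated by the conserved moments
\[
\Big(\rho_{h_\theta},\ \rho_{h_\theta}U_{h_\theta},\ E_{h_\theta}\Big),\qquad E_h=\rho_h(|U_h|^2+3T_h),
\]
and these moments depend affinely on $\theta$. We set
\[
\delta\rho=\rho_f-\rho_g,\qquad \delta m=\rho_fU_f-\rho_gU_g,\qquad \delta E=E_f-E_g .
\]
Then
\[
|\delta\rho|+|\delta m|+|\delta E|\le 2\int_{\mathbb{R}^3}(1+|v|^2)|f-g|\,dv .
\]
We regard $G=\nu_{\alpha,\beta}\mathcal M$ as a function of $(\rho,m,E)$ and write
\[
G(f)-G(g)=\int_0^1\Big(\partial_\rho G\,\delta\rho+\partial_m G\cdot\delta m+\partial_E G\,\delta E\Big)(h_\theta)\,d\theta .
\]
The whole task then reduces to showing
\[
\int_{\mathbb{R}^3}(1+|v|^2)\big(|\partial_\rho G|+|\partial_m G|+|\partial_E G|\big)(h_\theta)\,dv\le C_{q,\alpha,\beta}\,\varepsilon^\alpha .
\]
Once this is in hand, integrating in $x$ (and in $\theta$) gives the gain part of the estimate.

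For the chain rule we use $U=m/\rho$ and $T=(E/\rho-|U|^2)/3$. This gives
\[
\partial_\rho U=-\frac{U}{\rho},\qquad \partial_m U=\frac{1}{\rho},\qquad \partial_\rho T=\frac{|U|^2-3T}{3\rho},\qquad \partial_m T=-\frac{2U}{3\rho},\qquad \partial_E T=\frac{1}{3\rho}.
\]
Each factor $1/\rho$ is absorbed by the overall factor $\rho$ in $\mathcal M$. The derivatives of $\nu=\rho^\alpha T^\beta$ produce $\nu/\rho$ and $\beta\nu/T$, and the derivatives of $\mathcal M$ are handled by Lemma~\ref{element property of gaussian lemma}: $\partial_U\mathcal M$ costs a factor $T^{-1/2}$, and $\partial_T\mathcal M$ costs a factor $T^{-1}$. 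Collecting terms, every contribution is bounded by one of
\[
\rho^\alpha T^\beta(1+|U|^2+T),\qquad \rho^\alpha T^{\beta-1/2}|U|^k(1+|U|^2+T),\qquad \rho^\alpha T^{\beta-1}(1+|U|^2)(1+|U|^2+T),
\]
with $k\le 1$ in the middle family. We then dominate these by the quantities in Lemma~\ref{lem:Perthame ineq}(4)--(6), which are bounded by $C_{q,\alpha}\|h_\theta\|_{w_q}^\alpha\le C\varepsilon^\alpha$.

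The half-integer powers of $T$ are split by cases. When $T\ge1$ we use $T^{-1/2}\le1$, and the term falls under (6), possibly after raising the power of $(1+|U|^2+T)$, which is allowed since $1+\beta<\alpha(q-3)/2$ and this is exactly how (6) is proved. When $T\le 1$ we use $T^{-1/2}\le T^{-1}$ and land in (4)/(5). The worst terms are of the form $\rho^\alpha|U|^4T^{\beta-1}$ and products like $|U|^3T^{\beta-1}$. These are covered by (5) with $m=3,4$, and we must check that $4<\alpha(q-3)+2-2\beta$, which holds because $q>3+2(1+\beta)/\alpha$. Terms like $|U|^2T^{\beta-1}$ and $|U|T^{\beta-1}$ are interpolated between (4) and (5). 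For the pure $T^{\beta}$ and $T^{\beta+1}$ pieces we use (6).

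The loss term $\nu(f)f-\nu(g)g$ is handled more simply. We split it as
\[
\nu(f)(f-g)+\big(\nu(f)-\nu(g)\big)g .
\]
The first piece is bounded by $\|\nu(f)\|_{L^\infty}\,\|f-g\|_{L^1_2}$, and $\|\nu(f)\|_{L^\infty}\le C\varepsilon^\alpha$ by $(A_3)$. For the second piece we write $\nu(f)-\nu(g)$ again as a $\theta$-integral of $\partial_{(\rho,m,E)}\nu$ paired with $(\delta\rho,\delta m,\delta E)$. The weight is $\int(1+|v|^2)g\,dv\le C_q\|g\|_{w_q}\le C\varepsilon$, which is uniformly bounded in $x$.

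The main obstacle is the factor $|\partial\nu/\partial\rho|\sim\nu/\rho$ when $\alpha<1$, and the terms $\nu/T$ when $\rho$ or $T$ is small for $h_\theta$ but $\rho_g$ is not. Products such as $\rho_h^{\alpha-1}T_h^{\beta-1}\rho_g$ do not obviously reduce to Lemma~\ref{lem:Perthame ineq}, since that lemma controls $h_\theta$ only. I would first try to avoid this issue by using $\rho_g\le\rho_h/(1-\theta)$ and integrating the resulting singularity in $\theta$; if that fails, I would instead treat $\nu(f)-\nu(g)$ via Hölder continuity of $s\mapsto s^\alpha$ together with the bounds on $g$ from $(A_2)$. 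This step needs care. By contrast, the gain term is clean, because there $\rho$ multiplies everything and no negative power of $\rho$ survives.
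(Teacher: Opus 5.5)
Your gain-term argument follows the paper's: you linearize $\nu\mathcal{M}$ along $h_\theta$ in $(\rho,\rho U,E)$, use Lemma~\ref{element property of gaussian lemma} for the $v$-moments of $\partial_U\mathcal{M}$ and $\partial_T\mathcal{M}$, and close with Lemma~\ref{lem:Perthame ineq}(4)--(6) at $h_\theta$. The loss term, however, has a genuine gap. With your split $\nu(f)(f-g)+(\nu(f)-\nu(g))g$, the second piece, after integrating in $v$, leads to
\[
\rho_g\big(1+|U_g|^2+3T_g\big)\int_0^1\rho_{h_\theta}^{\alpha-1}\Big\{T_{h_\theta}^{\beta}+T_{h_\theta}^{\beta-1}\big(1+|U_{h_\theta}|^2\big)\Big\}\,d\theta\,\big(|\delta\rho|+|\delta m|+|\delta E|\big).
\]
This is a mixed product of moments of $g$ and of $h_\theta$, with negative powers of $\rho_{h_\theta}$ and $T_{h_\theta}$. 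Lemma~\ref{lem:Perthame ineq} only bounds combinations of the moments of a single nonnegative function. Bounding the weight by $\sup_x\int(1+|v|^2)g\,dv\le C\varepsilon$ discards exactly the density factor that would have to absorb $\rho_{h_\theta}^{\alpha-1}$.

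Your fallbacks do not close this. The bound $\rho_g\le\rho_{h_\theta}/(1-\theta)$ gives $\rho_{h_\theta}^{\alpha-1}\rho_g\le\rho_{h_\theta}^{\alpha}(1-\theta)^{-1}$, and $\int_0^1(1-\theta)^{-1}d\theta=\infty$. The sharper $\rho_{h_\theta}^{\alpha-1}\le((1-\theta)\rho_g)^{\alpha-1}$ is integrable in $\theta$, but it leaves $T_{h_\theta}^{\beta-1}$ and $U_{h_\theta}$ paired with moments of $g$; moreover $T_{h_\theta}$ can be much smaller than $T_g$ when $\rho_f\gg\rho_g$. H\"older continuity of $s\mapsto s^\alpha$ produces $|\rho_f-\rho_g|^\alpha$, which is not linear in $\|f-g\|_{L^1_2}$ after integrating in $x$. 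It also says nothing about $T^\beta$, which is itself singular in $\rho$ as a function of $(\rho,\rho U,E)$.

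The missing idea is the one you already noticed for the gain term: linearize the whole product,
\[
\frac{d}{d\theta}\big(\nu(h_\theta)h_\theta\big)=\nu(h_\theta)(f-g)+h_\theta\,\frac{d}{d\theta}\nu(h_\theta).
\]
Since $\frac{d}{d\theta}\nu(h_\theta)$ depends only on $x$, the $v$-integral of the second term carries the weight $\int(1+|v|^2)h_\theta\,dv=\rho_{h_\theta}(1+|U_{h_\theta}|^2+3T_{h_\theta})$. Its factor $\rho_{h_\theta}$ cancels $\rho_{h_\theta}^{\alpha-1}$. What remains, with $h=h_\theta$, is $\rho_h^\alpha T_h^{\beta+1}$, $\rho_h^\alpha T_h^{\beta}(1+|U_h|^2)$ and $\rho_h^\alpha T_h^{\beta-1}(1+|U_h|^2)^2$, all bounded by $C\|h_\theta\|_{w_q}^\alpha\le C\varepsilon^\alpha$ via Lemma~\ref{lem:Perthame ineq}(4)--(6). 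The first term $\nu(h_\theta)(f-g)$ is handled by (6). This is exactly the paper's proof.

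There is also a smaller slip in your gain term. For $T\ge1$ you use $T^{-1/2}\le1$ in $\rho^\alpha T^{\beta-1/2}|U|(1+|U|^2+T)$ and appeal to (6) after raising the power. That needs $\beta+\frac32\le\frac{\alpha(q-3)}{2}$, which does not follow from $q>3+\frac{2(1+\beta)}{\alpha}$. For example, take $\alpha=1$, $\beta=0$, $q\in(5,6)$, and a spread-out $f$ with $T\sim|U|^2\gg1$. Instead keep the $T^{-1/2}$ and use $T^{-1/2}(1+|U|)\le 1+\frac{1+|U|^2}{T}$. This folds your middle family into the other two, as in the paper's bound $\rho_h^\alpha T_h^\beta(1+|U_h|^2+T_h)\big(1+\frac{1+|U_h|^2}{T_h}\big)$.
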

\begin{proof}
    We define $h$ as a convex combination of $f$ and $g$ such that
    \begin{align*}
        \begin{split}
            h=\theta f+(1-\theta)g,\quad \theta\in(0,1).
        \end{split}
    \end{align*}
    Further, we define the moments of $h$, and its associated collision frequency $\nu_h$ and the Maxwellian $M_h$ as 
    \begin{align*}
        \begin{split}
            &(\rho_h,p_h,E_h)=(\rho_h,\rho_hU_h,\rho_h(|U_h|^2+3T_h))=\int_{\mathbb{R}^3}(1,v,|v|^2)hdv,\cr
            &\nu_h=\rho^\alpha_hT^{\beta}_h,\cr
            &\mathcal{M}_h=\frac{\rho_h}{(2\pi T_h)^{\frac{3}{2}}}\exp\left( -\frac{|v-U_h|^2}{2T_h}\right).
        \end{split}
    \end{align*}
    $\bullet$ \textbf{Estimate for the gain term:} Taylor's theorem gives
    \begin{align}\label{Taylor expansion}
        \begin{split}
            |\nu_{\alpha,\beta}(f)\mathcal{M}(f)-\nu_{\alpha,\beta}(g) \mathcal{M}(g)|&\leq \int^1_0\left| \frac{d}{d\theta}(\nu_h\mathcal{M}_h)\right|d\theta\cr
            &\leq |\rho_f-\rho_g|\int^1_0\left|\frac{\partial (\nu_h\mathcal{M}_{h})}{\partial\rho_h}\right|d\theta\cr
            &+|p_f-p_g|\int^1_0\left|\frac{\partial (\nu_h\mathcal{M}_{h})}{\partial p_h}\right|d\theta\cr
            &+|E_f-E_g|\int^1_0\left|\frac{\partial (\nu_h\mathcal{M}_{h})}{\partial E_h}\right|d\theta.
        \end{split}
    \end{align}
    We first observe that
    \begin{align}\label{Lip est 1}
        \begin{split}
            &\int_{\mathbb{R}^3}|\rho_f-\rho_g|dx\leq \int_{\mathbb{R}^6} |f-g|dvdx\leq \|f-g\|_{L^1_2},\cr
            &\int_{\mathbb{R}^3}|p_f-p_g|dx\leq \int_{\mathbb{R}^6} |v||f-g|dvdx\leq \|f-g\|_{L^1_2}\cr
            &\int_{\mathbb{R}^3}|E_f-E_g|dx\leq \int_{\mathbb{R}^6} |v|^2|f-g|dvdx\leq \|f-g\|_{L^1_2}.
        \end{split}
    \end{align}
    Moreover, we have
    \begin{align*}
        \begin{split}
            \|h\|_{w_q}\leq \theta\|f\|_{w_q} + (1-\theta)\|g\|_{w_q}\leq \varepsilon.
        \end{split}
    \end{align*}
    By direct calculation, we have
    \begin{align*}
        \begin{split}
            &\frac{\partial(\nu_h \mathcal{M}_{h})}{\partial \rho_h}\cr
            &=\rho^{\alpha-1}_h\left[\left((\alpha+1-\beta)T^{\beta}_h+\frac{\beta}{3}|U_h|^2T^{\beta-1}_h \right)\mathcal{M}_h-T^{\beta}_hU_h\cdot \frac{\partial \mathcal{M}_h}{\partial U_h}+T^{\beta}_h\left(\frac{|U_h|^2}{3}-T_h \right)\frac{\partial\mathcal{M}_h}{\partial T_h}\right],\cr
            &\frac{\partial(\nu_h \mathcal{M}_{h})}{\partial p_h}=\rho^{\alpha-1}_h\left[T^{\beta}_h\frac{\partial \mathcal{M}_h}{\partial U_h}-\frac{2}{3}T^{\beta}_h U_h \frac{\partial\mathcal{M}_h}{\partial T_h}-\frac{2\beta}{3}T^{\beta-1}_h U_h \mathcal{M}_h\right],\cr
            &\frac{\partial(\nu_h \mathcal{M}_{h})}{\partial E_h}=\rho^{\alpha-1}_h\left[\frac{1}{3}T^{\beta}_h\frac{\partial\mathcal{M}_h}{\partial T_h}+\frac{\beta}{3}T^{\beta-1}_h\mathcal{M}_h\right].
        \end{split}
    \end{align*}
    Then, we apply Lemma \ref{element property of gaussian lemma} to obtain
    \begin{align*}
        \begin{split}
    &\int_{\mathbb{R}^3}(1+|v|^2)\left(\left| \frac{\partial(\nu_h \mathcal{M}_{h})}{\partial \rho_h}\right|+\left| \frac{\partial(\nu_h \mathcal{M}_{h})}{\partial p_h}\right|+ \left| \frac{\partial(\nu_h \mathcal{M}_{h})}{\partial E_h}\right|\right)dv\cr
            &\leq C_{\alpha,\beta}\rho^\alpha_hT^{\beta}_h(1+|U_h|^2+T_h)\left(1+\frac{1+|U_h|^2}{T_h}\right)\cr
            &\leq C_{\alpha,\beta}\{\rho^\alpha_hT^{\beta}_h(1+|U_h|^2+T_h)+ \rho^\alpha_hT^{\beta-1}_h+\rho^\alpha_h|U_h|^3T^{\beta-1}_h+\rho^\alpha_h|U_h|^4T^{\beta-1}_h\}.
        \end{split}
    \end{align*}
    Therefore, Lemma \ref{lem:Perthame ineq}$_{(4)-(6)}$ leads to
    \begin{align}\label{Lip est 2}
        \begin{split}
            &\int_{\mathbb{R}^3}(1+|v|^2)\left(\left| \frac{\partial(\nu_h \mathcal{M}_{h})}{\partial \rho_h}\right|+\left| \frac{\partial(\nu_h \mathcal{M}_{h})}{\partial p_h}\right|+ \left| \frac{\partial(\nu_h \mathcal{M}_{h})}{\partial E_h}\right|\right)dv\leq C_{q,\alpha,\beta}\|h\|^\alpha_{w_q}\leq C_{q,\alpha,\beta}\varepsilon^\alpha.
        \end{split}
    \end{align}
    Multiplying \eqref{Taylor expansion} by $(1+|v|^2)$ and integrating over $x$ and $v$, we utilize \eqref{Lip est 1} and \eqref{Lip est 2} to obtain
    \begin{align*}
        \begin{split}
            \int_{\mathbb{R}^6}(1+|v|^2)|\nu_{\alpha,\beta}(f)\mathcal{M}(f)-\nu_{\alpha,\beta}(g)\mathcal{M}(g)|dvdx\leq C_{q,\alpha,\beta}\varepsilon^\alpha\|f-g\|_{L^1_2}.
        \end{split}
    \end{align*}
    $\bullet$ \textbf{Estimate for the loss term:} We first write
    \begin{align}\label{Taylor expansion 2}
        \begin{split}
            \int_{\mathbb{R}^6}(1+|v|^2)|\nu_{\alpha,\beta}(f)f-\nu_{\alpha,\beta}(g)g|dxdv&\leq \int_{\mathbb{R}^6}(1+|v|^2)\int^1_0\left|\frac{d}{d\theta}(\rho^\alpha_hT^{\beta}_h h)\right|d\theta dxdv\cr
            &\leq \int_{\mathbb{R}^6}(1+|v|^2)|\rho^\alpha_hT^{\beta}_h||f-g|dxdv\cr
            &+\int_{\mathbb{R}^3}\left(\int_{\mathbb{R}^3}(1+|v|^2)hdv\right)\int^1_0\left|\frac{d}{d\theta}(\rho^\alpha_hT^{\beta}_h)d\theta\right| dx\cr
            &\equiv I_1 + I_2.
        \end{split}
    \end{align}
    For $I_1$, Lemma~\ref{lem:Perthame ineq}$_{(6)}$ gives
    \begin{align*}
        \begin{split}
            I_1\leq C_{q,\alpha}\int^1_0\|h\|^\alpha_{w_q}d\theta\|f-g\|_{L^1_2}\leq C_{q,\alpha}\varepsilon^\alpha\|f-g\|_{L^1_2}.  
        \end{split}
    \end{align*}
    On the other hand, we observe the following from direct calculation:
    \begin{align*}
        \begin{split}
            \frac{d}{d\theta}(\rho^\alpha_hT^{\beta}_h)&=\rho^{\alpha-1}_h\left((\alpha-\beta)T^{\beta}_h+\frac{\beta}{3}|U_h|^2T^{\beta-1}_h \right)(\rho_f-\rho_g)\cr
            &-\frac{2\beta}{3}\rho^{\alpha-1}_hT^{\beta-1}_hU_h\cdot (p_f-p_g)\cr
            &+\frac{\beta}{3}\rho^{\alpha-1}_hT^{\beta-1}_{h}(E_f-E_g).
        \end{split}
    \end{align*}
    Then, we derive $I_2$ as follows:
    \begin{align*}
        \begin{split}
            I_2&\leq C_{\alpha,\beta}\int^1_0\int_{\mathbb{R}^3} \rho^\alpha_h(1+|U_h|^2+T_h)\{T^{\beta}_h+T^{\beta-1}_h(1+|U_h|^2)\}(|\rho_f-\rho_g|+|p_f-p_g|+|E_f-E_g|)dxd\theta\cr
            &\leq C_{q,\alpha,\beta}\int^1_0\|h\|^\alpha_{w_q}d\theta\int_{\mathbb{R}^3}(|\rho_f-\rho_g|+|p_f-p_g|+|E_f-E_g|)dx\cr
            &\leq C_{q,\alpha,\beta}\varepsilon^\alpha\|f-g\|_{L^1_2},
        \end{split}
    \end{align*}
    where we used 
    \begin{align*}
        \begin{split}
            \int_{\mathbb{R}^3} (1+|v|^2)hdv=\rho_h(1+|U_h|^2+3T_h),
        \end{split}
    \end{align*}
    and Lemma~\ref{lem:Perthame ineq}$_{(4)-(6)}$ such that
    \begin{align*}
        \begin{split}
            &\rho^\alpha_h(1+|U_h|^2+T_h)\{T^{\beta}_h+T^{\beta-1}_h(1+|U_h|^2)\}\cr
            &=\rho^\alpha_h T^{\beta+1}_h +2\rho^\alpha_h T^{\beta}(1+|U_h|^2)+\rho^\alpha_h T^{\beta-1}_h (1+|U_h|^2)^2\cr
            &\leq C_{q}\|h\|^\alpha_{w_q}.
        \end{split}
    \end{align*}
    Inserting the estimates for $I_1$ and $I_2$ into \eqref{Taylor expansion 2}, we get the desired result.
\end{proof}

\begin{proposition}\label{prop:cauchy prop}
    Assume that the hypothesis of Theorem \ref{thm:main thm} holds, and let $\{f_n\}$ be the sequence defined in \eqref{appr system}. Then, there exists $f\in\Omega$ such that, for every $T^f>0$,
    \begin{align*}
        \begin{split}
            \sup_{0\leq t\leq T^f}\|f_n-f\|_{L^{1}_2(\mathbb{R}^6)}\rightarrow 0,\text{ as }n\rightarrow \infty.
        \end{split}
    \end{align*}
\end{proposition}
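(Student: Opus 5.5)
The argument is a contraction-type estimate on finite time intervals, followed by passage to the limit in the conditions defining $\Omega$. Subtract the mild forms of $f_{n+1}$ and $f_n$. It is convenient to write \eqref{appr system} without the exponential:
\begin{align*}
f_{n+1}(t,x,v)=f_0(x-vt,v)+\int_0^t\big(\nu_{\alpha,\beta}(f_n)\mathcal{M}(f_n)-\nu_{\alpha,\beta}(f_n)f_{n+1}\big)(s,x-v(t-s),v)\,ds .
\end{align*}
Multiply the difference by $(1+|v|^2)$ and integrate in $(x,v)$. Since the map $(x,v)\mapsto(x-v(t-s),v)$ preserves measure, we get
\begin{align*}
\|f_{n+1}(t)-f_n(t)\|_{L^1_2}\leq \int_0^t \|\nu(f_n)\mathcal{M}(f_n)-\nu(f_{n-1})\mathcal{M}(f_{n-1})\|_{L^1_2}+\|\nu(f_n)f_{n+1}-\nu(f_{n-1})f_n\|_{L^1_2}\,ds .
\end{align*}

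The gain difference is bounded by $C_{Lip}\varepsilon^\alpha\|f_n-f_{n-1}\|_{L^1_2}$ using Proposition~\ref{prop:Lip prop}, since every $f_n\in\Omega$ by the corollary. The loss difference splits as
\begin{align*}
\nu(f_n)(f_{n+1}-f_n)+(\nu(f_n)f_n-\nu(f_{n-1})f_{n-1}).
\end{align*}
The second piece is again controlled by Proposition~\ref{prop:Lip prop}. The first piece is bounded via $(A_3)$ by $C\varepsilon^\alpha\|f_{n+1}-f_n\|_{L^1_2}$. Setting $a_n(t)=\sup_{s\le t}\|f_{n+1}(s)-f_n(s)\|_{L^1_2}$, this gives
\begin{align*}
a_n(t)\le C\varepsilon^\alpha\int_0^t\big(a_n(s)+a_{n-1}(s)\big)\,ds .
\end{align*}
Gronwall's inequality then yields $a_n(t)\le Ce^{Ct}\int_0^t a_{n-1}(s)\,ds$. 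Iterating, and using $a_1\le 4\|f_0\|_{L^1_2}$ from $(A_4)$, we obtain $a_n(T^f)\le 4\|f_0\|_{L^1_2}(C_{T^f}T^f)^{n-1}/(n-1)!$, which is summable. Hence $\{f_n\}$ is Cauchy in $C([0,T^f];L^1_2)$ for every $T^f$. A diagonal argument gives a single limit $f\in C([0,\infty);L^1_2)$. Alternatively, on short intervals where $C\varepsilon^\alpha\tau<1/2$ we have a direct contraction, which can be stepped forward in time.

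It remains to check that $f\in\Omega$. Nonnegativity and $(A_4)$ pass to the limit in $L^1_2$; for $(A_4)$ this follows from Fatou's lemma or directly from norm convergence. For $(A_2)$, extract an a.e. convergent subsequence for each $t$. The pointwise bounds $w_q f_n\le\varepsilon$ and $W_rf_n\le\varepsilon$ then persist a.e. (the sup-norms being understood as essential sup).

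The main obstacle is $(A_3)$, since $\nu_{\alpha,\beta}(f)$ is a nonlinear function of the moments. I would not pass to the limit in $\nu_{\alpha,\beta}(f_n)$ directly. Instead I would apply Lemma~\ref{lem:density and Maxwellian est}, estimate \eqref{lem d and M nu}, to the limit $f$ itself, using the already established bounds $\|f(t)\|_{W_r},\|f(t)\|_{w_q}\le\varepsilon$. This yields $(A_3)$ immediately, and the same lemma holds because it only requires nonnegativity and these weighted bounds.
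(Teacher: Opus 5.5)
Your proof is correct and follows essentially the paper's argument: Proposition~\ref{prop:Lip prop} handles the gain and $\nu_{\alpha,\beta}f$ differences, $(A_3)$ bounds $\nu_{\alpha,\beta}(f_n)$, and a factorial iteration on $[0,T^f]$ follows. The one technical difference is that the paper integrates the difference equation with the factor $e^{-\int_s^t\nu_{\alpha,\beta}(f_n)d\tau}\le 1$, which absorbs $\nu_{\alpha,\beta}(f_n)F_{n+1}$ and gives $\|F_{n+1}(t)\|_{L^1_2}\le C\varepsilon^\alpha\int_0^t\|F_n(s)\|_{L^1_2}ds$ directly, whereas you keep that term in the source and pay with an extra Gronwall step. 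Your explicit check that the limit satisfies $(A_1)$--$(A_4)$, in particular obtaining $(A_3)$ by applying \eqref{lem d and M nu} to $f$ itself, fills in a step the paper simply takes for granted.
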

\begin{proof}
    It is enough to show that $\{f_n\}_{n\in\mathbb{N}}$ is a Cauchy sequence in $\Omega$. Let $F_{n+1}=f_{n+1}-f_n$. We subtract \eqref{appr system} for the $n$-th step from the $(n+1)$-th step to get 
    \begin{align*}
        \begin{split}
            &\partial_t F_{n+1}+v\cdot \nabla_xF_{n+1}+\nu_{\alpha,\beta}(f_n)F_{n+1}\cr
            &=(\nu_{\alpha,\beta}(f_n)\mathcal{M}(f_n)-\nu_{\alpha,\beta}(f_{n-1})\mathcal{M}(f_{n-1}))-(\nu_{\alpha,\beta}(f_n)-\nu_{\alpha,\beta}(f_{n-1}))f_n.   
        \end{split}
    \end{align*}
    Integrating the above equation along the characteristic, we have
    \begin{align*}
        \begin{split}
            F_{n+1}(t,x,v)&=\int^t_0 e^{-\int^t_s \nu_{\alpha,\beta}(f_n)d\tau}(\nu_{\alpha,\beta}(f_n)\mathcal{M}(f_n)-\nu_{\alpha,\beta}(f_{n-1})\mathcal{M}(f_{n-1}))(s,x-v(t-s),v)ds\cr
            &-\int^t_0 e^{-\int^t_s \nu_{\alpha,\beta}(f_n)d\tau}(\nu_{\alpha,\beta}(f_n)-\nu_{\alpha,\beta}(f_{n-1}))f_n(s,x-v(t-s),v)ds.
        \end{split}
    \end{align*}
    Now we multiply $(1+|v|^2)$ on both sides and integrate over $(x,v)\in\mathbb{R}^6$. Then, we get
    \begin{align}\label{F cauchy est 1}
        \begin{split}
             \int_{\mathbb{R}^6}(1+|v|^2)|F_{n+1}(t,x,v)|dxdv&\leq \int^t_0 \|\nu_{\alpha,\beta}(f_n)\mathcal{M}(f_n)-\nu_{\alpha,\beta}(f_{n-1})\mathcal{M}(f_{n-1}))(s)\|_{L^1_2}ds\cr
            &+\int^t_0 \int_{\mathbb{R}^6}(1+|v|^2)|\nu_{\alpha,\beta}(f_n)-\nu_{\alpha,\beta}(f_{n-1})|f_n(s,x,v)dxdvds\cr
            &\equiv I_1+I_2,
        \end{split}
    \end{align}
    where we used the measure-preserving property on $(x-v(t-s),v)\rightarrow (x,v)$. For $I_1$, Proposition~\ref{prop:Lip prop} leads to
    \begin{align*}
        \begin{split}
            I_1\leq C_{Lip}\varepsilon^\alpha\int^t_0 \|F_n(s)\|_{L^1_2}ds. 
        \end{split}
    \end{align*}
    For $I_2$, we have
    \begin{align*}
        \begin{split}
            I_2&\leq \int^t_0\int_{\mathbb{R}^6}(1+|v|^2)|\nu_{\alpha,\beta}(f_n)f_n-\nu_{\alpha,\beta}(f_{n-1})f_{n-1}|dxdvds\cr
            &+\int^t_0\int_{\mathbb{R}^6}(1+|v|^2)|\nu_{\alpha,\beta}(f_{n-1})||f_n-f_{n-1}|dxdvds\cr
            &\leq C_{Lip}\varepsilon^\alpha\int^t_0\|F_n(s)\|_{L^1_2}ds+C_{r,q}\varepsilon^\alpha\int^t_0\|F_n(s)\|_{L^1_2}ds,
        \end{split}
    \end{align*}
    where we used Proposition~\ref{prop:Lip prop} and $\nu_{\alpha,\beta}(f)\leq C_{r,q,\alpha}\varepsilon^\alpha$ if $f\in\Omega$. Plugging the estimate for $I_1$ and $I_2$ into \eqref{F cauchy est 1}, we deduce
    \begin{align}\label{F Cauchy est 2}
        \begin{split}
            \|F_{n+1}(t)\|_{L^1_2}\leq (2C_{Lip}+C_{r,q,\alpha})\varepsilon^\alpha\int^t_0 \|F_n(s)\|_{L^{1}_2}ds.
        \end{split}
    \end{align}
    Let $(2C_{Lip}+C_{r,q,\alpha})\varepsilon^\alpha T^f=C_*$. For $n\geq 1$, we iterate \eqref{F Cauchy est 2} to obtain
    \begin{align*}
        \begin{split}
            \sup_{0\leq t\leq T^f}\|F_{n+1}(t)\|_{L^1_2}\leq \frac{C^{n-1}_*}{(n-1)!}\sup_{0\leq s\leq T^f}\|F_2(s)\|_{L^1_2}  \leq \frac{C^{n-1}_*}{(n-1)!}(\|f_2\|_{L^1_2}+\|f_1\|_{L^1_2})\leq \frac{4\|f_0\|_{L^1_2}C^{n-1}_*}{(n-1)!}.
        \end{split}
    \end{align*}
    Therefore, for $m>n\geq 2$, the following estimate holds:
\begin{align*}
    \begin{split}
        \sup_{0\leq t\le T^f}\|(f_m-f_n)(t)\|_{L^1_2(\mathbb{R}^6)}\leq 4\|f_0\|_{L^1_2}\left(e^{C_*}-\sum^{n-2}_{k=0}\frac{C^k_*}{k!} \right)\rightarrow 0,\text{ as }m,n\rightarrow \infty.
    \end{split}
\end{align*}
Thus, $\{f_n\}$ is Cauchy in $C([0,T^f];L^1_2)$. Since the iterates are defined on the entire half-line in $t$, their limits on different finite time intervals agree. That is, they determine a single function $f\in C([0,\infty_;L^1_2])$ such that
\begin{align*}
    \begin{split}
        \sup_{0\leq t\leq T^f}\|(f_n-f)(t)\|_{L^1_2}\rightarrow 0,\text{ for every }T^f>0.
    \end{split}
\end{align*} 
\end{proof}

\section{Proof for Theorem~\ref{thm:main thm}}\label{S5}
In the last section, we prove Theorem~\ref{thm:main thm}. By Proposition~\ref{prop:cauchy prop}, we have already established that there exists $f\in\Omega$ which satisfies $f^n\rightarrow f$ in $L^1_2$. Therefore, it is enough to show that the nonlinear source in \eqref{appr system}; $\nu_{\alpha,\beta}(f_n)(\mathcal{M}(f_n)-f_{n+1})$ converges to $\nu_{\alpha,\beta}(f)(\mathcal{M}(f)- f)$. From Proposition~\ref{prop:Lip prop}, we have
\begin{align*}
    \begin{split}
        \|\nu_{\alpha,\beta}(f_n)\mathcal{M}(f_n)-\nu_{\alpha,\beta}(f)\mathcal{M}(f)\|_{L^1_2}\leq C_{Lip}\varepsilon^\alpha\|f_n- f\|_{L^1_2}\rightarrow 0,\text{ uniformly in }t.
    \end{split}
\end{align*}
Moreover, we have
\begin{align*}
    \begin{split}
        \|\nu_{\alpha,\beta}(f_n)f_{n+1}-\nu_{\alpha,\beta}(f) f\|_{L^1_2}&\leq \|\nu_{\alpha,\beta}(f_{n+1})f_{n+1}-\nu_{\alpha,\beta}(f_n)f_n\|_{L^1_2}\cr
        &+\|\nu_{\alpha,\beta}(f_n)(f_{n+1}-f_n)\|_{L^1_2}\cr
        &+\|\nu_{\alpha,\beta}(f_{n+1})f_{n+1}-\nu_{\alpha,\beta}(f)f\|_{L^1_2}.
    \end{split}
\end{align*}
The first and third terms converge to zero by Proposition~\ref{prop:Lip prop}, and the second term is controlled by $f_n\in\mathcal{S}$ and \eqref{lem d and M nu} in Lemma~\ref{lem:density and Maxwellian est}
\begin{align*}
    \begin{split}
        \|\nu_{\alpha,\beta}(f_n)(f_{n+1}-f_n)\|_{L^1_2}\leq |\nu_{\alpha,\beta}(f_n)|\|f_{n+1}-f_n\|_{L^1_2}\leq C_{r,q}\varepsilon^\alpha\|f_{n+1}-f_n\|_{L^1_2}\rightarrow 0.
    \end{split}
\end{align*}
Therefore, if we pass to the limit in the following formula, which originates from \eqref{appr system}
\begin{align*}
    \begin{split}
        f_{n+1}(t,x,v)=f_0(x-vt,v)+\int^t_0 \nu_{\alpha,\beta}(f_n)(\mathcal{M}(f_n)-f_{n+1})(s,x-v(t-s),v)ds.
    \end{split}
\end{align*}
Then, we can conclude that $f$ is the unique mild solution to \eqref{BGK model}, which satisfies Definition~\ref{def:Sol def}. Indeed, we can show the uniqueness of the solution. Let $f,g\in\Omega$ be two mild solutions with the same initial data $f_0$. Then, the difference between them satisfies
\begin{align*}
    \begin{split}
        (\partial_t +v\cdot \nabla_x+\nu_{\alpha,\beta}(f))(f-g)=(\nu_{\alpha,\beta}(f)\mathcal{M}(f)-\nu_{\alpha,\beta}(g)\mathcal{M}(g))-(\nu_{\alpha,\beta}(f)-\nu_{\alpha,\beta}(g))g.
    \end{split}
\end{align*}
Using the analogous argument in the proof of Proposition~\ref{prop:cauchy prop}, we deduce
\begin{align*}
    \begin{split}
        \|(f-g)(t)\|_{L^1_2}\leq C_q\varepsilon^\alpha\int^t_0 \|(f-g)(s)\|_{L^1_2}ds.
    \end{split}
\end{align*}
Then, Gronwall's inequality yields $f=g$. This completes the proof of Theorem~\ref{thm:main thm}.\\
\newline
\noindent{\bf Acknowledgment}\newline
\noindent This work was supported by the National Research Foundation of
Korea(NRF) grant funded by the Korea government(MSIT). (No.RS-2023-NR076676).\newline 
\newline
\noindent\textbf{Declaration of generative AI and AI-assisted technologies in the manuscript preparation process}\newline
During the preparation of this work, the author(s) used ChatGPT (OpenAI) for rephrasing and improving the readability of the Introduction section. The author(s) reviewed and edited the output as needed and take full responsibility for the content of the published article.
\bibliographystyle{abbrv}
\bibliography{ref} 

@article{bhatnagar1954model,
  title={A model for collision processes in gases. I. Small amplitude processes in charged and neutral one-component systems},
  author={Bhatnagar, Prabhu Lal and Gross, Eugene P and Krook, Max},
  journal={Physical review},
  volume={94},
  number={3},
  pages={511},
  year={1954},
  publisher={APS}
}

@article{chen2016global,
  title={Global existence and uniqueness to the {C}auchy problem of the {BGK} equation with infinite energy},
  author={Chen, Zili and Zhang, Xianwen},
  journal={Mathematical Methods in the Applied Sciences},
  volume={39},
  number={11},
  pages={3116--3135},
  year={2016},
  publisher={Wiley Online Library}
}

@article{chen2018smooth,
  title={Smooth solutions to the {BGK} equation and the {ES-BGK} equation with infinite energy},
  author={Chen, Zili},
  journal={Journal of Differential Equations},
  volume={265},
  number={1},
  pages={389--416},
  year={2018},
  publisher={Elsevier}
}

@article{guo2001vlasov,
  title={The {V}lasov--{P}oisson--{B}oltzmann system near vacuum},
  author={Guo, Yan},
  journal={Communications in Mathematical Physics},
  volume={218},
  pages={293--313},
  year={2001},
  publisher={Springer}
}

@article{illner1984boltzmann,
  title={The {B}oltzmann equation: global existence for a rare gas in an infinite vacuum},
  author={Illner, Reinhard and Shinbrot, Marvin},
  journal={Communications in mathematical physics},
  volume={95},
  number={2},
  pages={217--226},
  year={1984},
  publisher={Springer}
}

@article{bellomo1985cauchy,
  title={On the {C}auchy problem for the nonlinear {B}oltzmann equation global existence uniqueness and asymptotic stability},
  author={Bellomo, Nicola and Toscani, Giuseppe},
  journal={Journal of mathematical physics},
  volume={26},
  number={2},
  pages={334--338},
  year={1985},
  publisher={American Institute of Physics}
}

@article{toscani1986non,
  title={On the non-linear {B}oltzmann equation in unbounded domains},
  author={Toscani, G},
  journal={Archive for Rational Mechanics and Analysis},
  volume={95},
  pages={37--49},
  year={1986},
  publisher={Springer}
}

@article{glassey2006global,
  title={Global solutions to the {C}auchy Problem for the Relativistic {B}oltzmann Equation with Near--Vacuum Data.},
  author={Glassey, Robert T},
  journal={Communications in mathematical physics},
  volume={264},
  number={3},
  year={2006}
}

@article{ha20041,
  title={${L^1}$ stability of the {B}oltzmann equation for the hard-sphere model},
  author={Ha, Seung-Yeal},
  journal={Archive for rational mechanics and analysis},
  volume={173},
  pages={279--296},
  year={2004},
  publisher={Springer}
}

@article{ha2005nonlinear,
  title={Nonlinear functionals of the {B}oltzmann equation and uniform stability estimates},
  author={Ha, Seung-Yeal},
  journal={Journal of Differential Equations},
  volume={215},
  number={1},
  pages={178--205},
  year={2005},
  publisher={Elsevier}
}

@article{duan2006boltzmann,
  title={Boltzmann equation with external force and {V}lasov-{P}oisson-{B}oltzmann system in infinite vacuum},
  author={Duan, RJ and Yang, Tong and Zhu, CJ},
  journal={Discrete and Continuous Dynamical Systems},
  volume={16},
  number={1},
  pages={253},
  year={2006},
  publisher={Citeseer}
}

@article{duan2006l1,
  title={${L^1}$ stability for the {V}lasov--{P}oisson--{B}oltzmann system around vacuum},
  author={Duan, Renjun and Zhang, Mei and Zhu, Changjiang},
  journal={Mathematical Models and Methods in Applied Sciences},
  volume={16},
  number={09},
  pages={1505--1526},
  year={2006},
  publisher={World Scientific}
}

@article{chae2006time,
  title={Time-asymptotic behavior of the {V}lasov--{P}oisson--{B}oltzmann system near vacuum},
  author={Chae, Myeongju and Ha, Seung-Yeal and Hwang, Hyung Ju},
  journal={Journal of Differential Equations},
  volume={230},
  number={1},
  pages={71--85},
  year={2006},
  publisher={Elsevier}
}

@article{toscani1987h,
  title={H-theorem and asymptotic trend of the solution for a rarefied gas in the vacuum},
  author={Toscani, G},
  journal={Archive for Rational Mechanics and Analysis},
  volume={100},
  pages={1--12},
  year={1987},
  publisher={Springer}
}

@article{toscani1988global,
  title={Global solution of the initial value problem for the {B}oltzmann equation near a local {M}axwellian},
  author={Toscani, G},
  journal={Archive for Rational Mechanics and Analysis},
  volume={102},
  pages={231--241},
  year={1988},
  publisher={Springer}
}

@article{palczewski1989global,
  title={Global solution of the {B}oltzmann equation for rigid spheres and initial data close to a local {M}axwellian},
  author={Palczewski, A and Toscani, G},
  journal={Journal of mathematical physics},
  volume={30},
  number={10},
  pages={2445--2450},
  year={1989},
  publisher={American Institute of Physics}
}

@article{son2023bgk,
  title={The {ES-BGK} for the polyatomic molecules with infinite energy},
  author={Son, Sung-jun and Yun, Seok-Bae},
  journal={Journal of Statistical Physics},
  volume={190},
  number={8},
  pages={129},
  year={2023},
  publisher={Springer}
}

@article{mieussens2000discrete2,
  title={Discrete-velocity models and numerical schemes for the {B}oltzmann-{BGK} equation in plane and axisymmetric geometries},
  author={Mieussens, Luc},
  journal={Journal of Computational Physics},
  volume={162},
  number={2},
  pages={429--466},
  year={2000},
  publisher={Elsevier}
}

@article{pieraccini2007implicit,
  title={Implicit--explicit schemes for {BGK} kinetic equations},
  author={Pieraccini, Sandra and Puppo, Gabriella},
  journal={Journal of Scientific Computing},
  volume={32},
  pages={1--28},
  year={2007},
  publisher={Springer}
}

@article{boscheri2020high,
  title={High order central {WENO}-Implicit-Explicit {R}unge {K}utta schemes for the {BGK} model on general polygonal meshes},
  author={Boscheri, Walter and Dimarco, Giacomo},
  journal={Journal of Computational Physics},
  volume={422},
  pages={109766},
  year={2020},
  publisher={Elsevier}
}

@book{chapman1970mathematical,
  title={The mathematical theory of non-uniform gases: an account of the kinetic theory of viscosity, thermal conduction and diffusion in gases},
  author={Chapman, Sydney and Cowling, Thomas George},
  year={1970},
  publisher={Cambridge university press}
}

@article{itikawa1973effective,
  title={Effective collision frequency of electrons in gases},
  author={Itikawa, Yukikazu},
  journal={The physics of fluids},
  volume={16},
  number={6},
  pages={831--835},
  year={1973},
  publisher={AIP Publishing}
}

@article{yang2013kinetic,
  title={Kinetic numerical methods for solving the semiclassical Boltzmann-BGK equation},
  author={Yang, Jaw-Yen and Muljadi, Bagus Putra and Chen, Su-Yuan and Li, Zhi-Hui},
  journal={Computers \& Fluids},
  volume={85},
  pages={153--165},
  year={2013},
  publisher={Elsevier}
}

@article{aoki1990numerical,
  title={Numerical analysis of gas flows condensing on its plane condensed phase on the basis of kinetic theory},
  author={Aoki, Kazuo and Sone, Yoshio and Yamada, Tatsuo},
  journal={Physics of Fluids A: Fluid Dynamics},
  volume={2},
  number={10},
  pages={1867--1878},
  year={1990},
  publisher={American Institute of Physics}
}

@article{perthame1993weighted,
  title={Weighted ${L}^\infty$ bounds and uniqueness for the {B}oltzmann {B}GK model},
  author={Perthame, Beno{\^\i}t and Pulvirenti, Mario},
  journal={Archive for rational mechanics and analysis},
  volume={125},
  number={3},
  pages={289--295},
  year={1993},
  publisher={Springer}
}

@article{issautier1996convergence,
  title={Convergence of a weighted particle method for solving the {B}oltzmann ({BGK}) equation},
  author={Issautier, Didier},
  journal={SIAM journal on numerical analysis},
  volume={33},
  number={6},
  pages={2099--2119},
  year={1996},
  publisher={SIAM}
}

@article{yun2015classical,
  title={Classical solutions for the ellipsoidal {BGK} model with fixed collision frequency},
  author={Yun, Seok-Bae},
  journal={Journal of Differential Equations},
  volume={259},
  number={11},
  pages={6009--6037},
  year={2015},
  publisher={Elsevier}
}

@article{perthame1989global,
  title={Global existence to the {BGK} model of {B}oltzmann equation},
  author={Perthame, Benoˆ{\i}t},
  journal={Journal of Differential equations},
  volume={82},
  number={1},
  pages={191--205},
  year={1989},
  publisher={Elsevier}
}

@article{mischler1996uniqueness,
  title={Uniqueness for the {BGK}-equation in $\mathbb{R}^{N}$ and rate of convergence for a semi-discrete scheme},
  author={Mischler, St{\'e}phane},
  journal={Differential Integral Equations},
  volume={9},
  number={5},
  pages={1119--1138},
  year={1996}
}

@article{zhang2007lp,
  title={${L}^p$ solutions to the {C}auchy problem of the {BGK} equation},
  author={Zhang, Xianwen and Hu, Shigeng},
  journal={Journal of mathematical physics},
  volume={48},
  number={11},
  year={2007},
  publisher={AIP Publishing}
}

@article{zhang2010cauchy,
  title={On the {C}auchy problem of the {V}lasov-{P}oisson-{BGK} system: global existence of weak solutions},
  author={Zhang, Xianwen},
  journal={Journal of Statistical Physics},
  volume={141},
  number={3},
  pages={566--588},
  year={2010},
  publisher={Springer}
}

@article{wei2012cauchy,
  title={The {C}auchy problem for the {BGK} equation with an external force},
  author={Wei, Jinbo and Zhang, Xianwen},
  journal={Journal of mathematical analysis and applications},
  volume={391},
  number={1},
  pages={10--25},
  year={2012},
  publisher={Elsevier}
}

@article{yun2010cauchy,
  title={Cauchy problem for the {B}oltzmann-{BGK} model near a global Maxwellian},
  author={Yun, Seok-Bae},
  journal={Journal of mathematical physics},
  volume={51},
  number={12},
  year={2010},
  publisher={AIP Publishing}
}

@article{bellouquid2003global,
  title={Global existence and large-time behavior for {BGK} model for a gas with non-constant cross section},
  author={Bellouquid, A},
  journal={Transport theory and statistical physics},
  volume={32},
  number={2},
  year={2003},
  publisher={Taylor \& Francis}
}

@article{yun2019ellipsoidal,
  title={Ellipsoidal {BGK} model for polyatomic molecules near {M}axwellians: {A} dichotomy in the dissipation estimate},
  author={Yun, Seok-Bae},
  journal={Journal of Differential Equations},
  volume={266},
  number={9},
  pages={5566--5614},
  year={2019},
  publisher={Elsevier}
}

@article{brull2020stationary,
  title={Stationary Flows of the {ES}--{BGK} model with the correct Prandtl number},
  author={Brull, Stephane and Yun, Seok-Bae},
  journal={arXiv preprint arXiv:2012.08490},
  year={2020}
}

@article{bang2016stationary,
  title={Stationary solutions for the ellipsoidal {BGK} model in a slab},
  author={Bang, Jeaheang and Yun, Seok-Bae},
  journal={Journal of Differential Equations},
  volume={261},
  number={10},
  pages={5803--5828},
  year={2016},
  publisher={Elsevier}
}

@article{yang1995rarefied,
  title={Rarefied flow computations using nonlinear model {B}oltzmann equations},
  author={Yang, JY and Huang, JC},
  journal={Journal of Computational Physics},
  volume={120},
  number={2},
  pages={323--339},
  year={1995},
  publisher={Elsevier}
}

@article{russo2009semilagrangian,
  title={Semilagrangian schemes applied to moving boundary problems for the {BGK} model of rarefied gas dynamics},
  author={Russo, Giovanni and Filbet, Francis},
  journal={Kinetic and related models},
  volume={2},
  number={1},
  pages={231--250},
  year={2009}
}

@article{russo2012convergence,
  title={Convergence of a semi-Lagrangian scheme for the {BGK} model of the Boltzmann equation},
  author={Russo, Giovanni and Santagati, Pietro and Yun, Seok-Bae},
  journal={SIAM Journal on Numerical Analysis},
  volume={50},
  number={3},
  pages={1111--1135},
  year={2012},
  publisher={SIAM}
}

@article{russo2018convergence,
  title={Convergence of a semi-Lagrangian scheme for the ellipsoidal {BGK} model of the {B}oltzmann equation},
  author={Russo, Giovanni and Yun, Seok-Bae},
  journal={SIAM Journal on Numerical Analysis},
  volume={56},
  number={6},
  pages={3580--3610},
  year={2018},
  publisher={SIAM}
}

@article{duan2017global,
  title={Global existence for the ellipsoidal {BGK} model with initial large oscillations},
  author={Duan, Renjun and Wang, Yong and Yang, Tong},
  journal={SCIENTIA SINICA Mathematica},
  volume={47},
  number={10},
  pages={1143--1154},
  year={2017},
  publisher={Science China Press}
}

@article{bae2026large,
  title={Large amplitude problem of {BGK} model: Relaxation to quadratic nonlinearity},
  author={Bae, Gi-Chan and Ko, Gyounghun and Lee, Donghyun and Yun, Seok-Bae},
  journal={SIAM Journal on Mathematical Analysis},
  volume={58},
  number={2},
  pages={1530--1570},
  year={2026},
  publisher={SIAM}
}

@article{rapp2010equilibration,
  title={Equilibration rates and negative absolute temperatures for ultracold atoms in optical lattices},
  author={Rapp, Akos and Mandt, Stephan and Rosch, Achim},
  journal={Physical review letters},
  volume={105},
  number={22},
  pages={220405},
  year={2010},
  publisher={APS}
}

@unpublished{Yun2026,
  author = {Yun, Seok-Bae},
  title  = {Cauchy problem for the {B}oltzmann-{BGK} model with physical collision frequency},
  note   = {Preprint},
  year   = {2026}
}

\end{document}